\documentclass[8pt,B5]{article}

\usepackage{graphicx,amsmath,latexsym,amssymb,amsthm,fancyhdr}


\usepackage{enumitem}
\usepackage{stmaryrd}



\hoffset=0.0in
\voffset=-0.25in

\textwidth 15.5truecm \textheight 9.3truein \headsep0.5cm
\oddsidemargin 1.6cm  \evensidemargin -0.6cm  \topmargin 0.5cm



\font\chuto=cmbx10 at 16pt  



\pagestyle{fancy}
\fancyhead{} 
\fancyhead[RE]{\small{\thepage \hfill \textit{ E.~Schulz and A.~Seesanea}}}  
\fancyhead[LO]{\small{\textit{Extensions of the Heisenberg Group by Two-Parameter Groups of Dilations} \hfill \thepage}}
\fancyfoot{} 


%
\newtheorem{thm}{Theorem}[section]
\newtheorem{cor}[thm]{Corollary}
\newtheorem{lem}[thm]{Lemma}
\newtheorem{prop}[thm]{Proposition}
\theoremstyle{definition}

\newtheorem{rem}[thm]{Remark}

\numberwithin{equation}{section}




\def\mydil{S}
\def\myxi{\xi}
\def\trp#1{#1^{T}}
\def\rnhat{\widehat{\R^n}}
\def\transp{T}
\def\R{\mathbb{R}}

\begin{document}

\thispagestyle{empty}

\centerline {\bf \chuto  Extensions of the Heisenberg Group by}
\medskip
\centerline {\bf \chuto  Two-Parameter Groups of Dilations}

\renewcommand{\thefootnote}{\fnsymbol{footnote}}

\vskip.8cm

\centerline {Eckart Schulz$^{1}$ and
             Adisak Seesanea$^{2,}${\footnote{\textit{The author is supported by Development and Promotion of Science and Technology Talents Project (DPST).}}}             
}

\renewcommand{\thefootnote}{\arabic{footnote}}

\vskip.5cm

\centerline{School of Mathematics, Suranaree University of Technology, Nakhon Ratchasima 30000, Thailand}
\centerline{\texttt{$^1$eckart@math.sut.ac.th ~ $^2$adisak@math.sut.ac.th}}

\vskip .5cm


\begin{abstract}
We introduce extensions of the multidimensional Heisenberg group $\mathbb{H}^n$  by two-parameter groups of dilations, and then classify the extended groups up to isomorphism, by employing Lie algebra techniques. We show that the groups are isomorphic to subgroups of the symplectic group $\textit{Sp(}n+1,\mathbb{R})$  as well as subgroups of the affine group $\textit{Aff}(n+1,\mathbb{R})$. Thus, they possess both, a metaplectic and a wavelet representation. Moreover, the metaplectic representation splits into a sum of two  subrepresentations which both are equivalent to the same subrepresentation of the wavelet representation.
\end{abstract}


\medskip

\noindent
{\bf Mathematics Subject Classification (2010):} 17B45, 22D10, 42C40 

\smallskip
\noindent
{\bf Keywords:} Heisenberg group, Lie algebra, Metaplectic representation, Wavelet representation 



\section{Introduction}
The Heisenberg group $\mathbb{H}^n$ plays a fundamental role in quantum mechanics \cite{Folland} and signal processing \cite{Grochenig}. 
Recall that the matrix
$ \mathcal{J} = \left[ {\begin{smallmatrix}
  						 0 & -I_n  \\ I_n & 0  
   			 \end{smallmatrix} } \right ]
$ 
determines a \emph{symplectic form} on $\R^{2n}$ by
\begin{equation}\label{eq:sympl}
		\qquad\qquad\qquad 	\llbracket  w  , \tilde w \rrbracket =  \trp{w} \mathcal{J} \tilde  w 
					\qquad\qquad (w , \tilde w \in \R^{2n}).
\end{equation}
The \emph{Heisenberg group} is the set
\[
	\mathbb{H}^n  = \bigl\{  (  w ,z) \ :   \	 w  \in \R^{2n},\  z \in \mathbb{R} \,  \bigr \}
\]
endowed with the topology of $\R^{2n+1}$ and the group operation
\[
   ( w ,z)( \tilde w ,\tilde z) = \bigl  ( w  + \tilde w, z+ \tilde z + \frac{1}{2} \llbracket w , \tilde w \rrbracket \bigr ) .
\]
The \emph{symplectic group} $ Sp(n, \mathbb{R}) $ acts naturally on $\mathbb{H}^n$ by automorphisms.
 Recall here that the symplectic group is the set of all 
invertible matrices preserving the symplectic form (\ref{eq:sympl}),
\[
	 Sp(n, \mathbb{R}) = \left \{ \, \mathcal{A} \in GL_{2n}(\R) \ \big | \ 
	 	\llbracket   {\mathcal{A}  w ,\mathcal{A}  \tilde w  } \rrbracket
	 			= \llbracket  w ,  \tilde w  \rrbracket \qquad \forall \  w, \tilde w  \in {\R}^{2n} \ \right \},
\]
and each of its elements defines an automorphism $\alpha_{\mathcal{A}}$ of $\mathbb{H}^n$ by
\[		\alpha_{\mathcal{A}}(w,z) = (	\mathcal{A}w,z)	\]
 fixing the elements of the center $Z=\{(0,z):z \in \R \}$ of $\mathbb{H}^n$.

Our interest centers around  a different family of automorphisms. Separating the phase space $W=\{(w,0): w \in \R^{2n} \}$
into its two components $X=  \{  ((x,0),0   ) : x \in \R^n   \}$ and  $Y  = \{  ((0,y),0 ) : y \in \R^n   \}$,
we consider automorphisms which leave all three subgroups $X$, $Y$ and $Z$ invariant,
without fixing elements of the center $Z$.

For this purpose, it will be more convenient to work with the \emph{polarized Heisenberg group} $\mathbb{H}_{pol}^n $,
\[
	\mathbb{H}_{pol}^n  = \bigl\{ \, (  x , y,z) \ : 	 \ 	 x, y \in \R^n,\,  z \in \mathbb{R} \,  \bigr \}
\]
which has the group operation
\[
   ( x , y ,z)( \tilde x,\tilde y,\tilde z) = \left ( x + \tilde x,  y + \tilde y, z+\tilde z + \trp{y} \tilde x \right ) 
\]
and the simple representation as a matrix group
\[	\mathbb{H}_{pol}^n = \left \{  h( x , y, z) \, = \, 
\begin{bmatrix}
  		 1 & \trp{y} & z  \\
   		 0 & I_n  &  x   \\
  		 0 & 0 & 1
	\end{bmatrix}\  : \  x, y  \in \R^n,  \ z \in \R  \right \} \subset    GL_{n + 2} (\mathbb{R}).
\]
The two Heisenberg groups are isomorphic  via the map
\[	\qquad\qquad
	\Psi : (  w,z) \in \mathbb{H}^n \mapsto  h(  x , y,z+\textstyle \frac{1}{2} \trp{y}  x) \in \mathbb{H}_{pol}^n
				\qquad   \left ( w = \left [ \begin{smallmatrix} x\\ y  \end{smallmatrix} \right ],  \ x,y \in \R^n \right ).
\]

Now consider the closed subgroup of $GL_{n+2}(\R)$ of the form
\[
	D_0 = \left \{ \ 
						d(a,A,c) : =\left[ {\begin{array}{*{20}c}
   											a & 0 & 0  \\
  										 0 & {A} & 0  \\
   										0 & 0 & c  \\
 									\end{array} } \right]
 						\ : a,c \in \R \backslash \{0\}, \ A \in GL_n(\R) \, \right \}.
\]
There is a natural action $\alpha$ of $D_0$ on $\mathbb{H}_{pol}^n$ by conjugation,
\begin{equation}\label{equa:01}
				\qquad 	\alpha_d \left ( \,  h( x, y ,z) \, \right )  =
							d(a,A,c) \, h( x, y ,z) \, d(a,A,c)^{-1} \qquad (d=d(a,A,c) \in D_0).
\end{equation}
Given a closed subgroup $D$ of $D_0$, one thus obtains a semi-direct product 
$
				\mathbb{H}^{n}_{pol} \rtimes D
$
which can be represented as a matrix group 
\begin{equation}\label{eq:01}
				\mathbb{H}^{n}_{pol} \rtimes D \cong	\left\{ \, h(x, y ,z)d(a,A,c) \,:\, h(x, y, z) \in \mathbb H_{pol}^n,  \,
								d(a,A, c) \in D \, \right\} \subset GL_{n+2}(\R).
\end{equation}
Observe that replacing $d(a,A,c)$ with $d(c^{-1}a, c^{-1}A,1)$ results in the same automorphism and hence an isomorphic semi-direct product,
so one need only consider groups $D$ with $c=1$.
When $D=\{ \, d(1,A,1): A \in GL_n(\R) \, \} \cong GL_n(\R)$, this semi-direct product is called the 
\emph{affine-Weyl-Heisenberg group}. It has been extensively studied in \cite{Ali, Hogan, Kalisa, Torresani}.

In \cite{Schulz}, the groups (\ref{eq:01}) were studied in case $n=1$ (so that $A$ is a scalar) and $a,A \in \mathbb{R}^{+},$ $c=1$,
and were classified up to isomorphism. It was further noticed that  they are isomorphic to
 affine semi-direct products, 
\begin{equation}\label{equa:03}
		\mathbb{H}^{1}_{pol} \rtimes D \cong \mathbb R^{2} \rtimes H
\end{equation}
where $H$ is a closed subgroup of $GL_2(\R)$, and $\mathbb R^{2}$  and $H$ are identified with the groups of  matrices, 
\[
		\mathbb R^{2} \cong  \left\{  \, h(x,0,z) \; : \;  x  , z \in \mathbb{R} \,  \right\}
\]
and 
\[
		H \cong  \left\{ \, h(0,y,0)d(a,A,1) 	\,:\, a,A \in \R^+ \, \right \}.
\]
Thus, the groups are isomorphic to subgroups of the affine group and have a wavelet representation.

In \cite{Cordero} and \cite{Czaja}, two subgroups of the symplectic group $Sp(n+1,\mathbb{R})$, denoted $(CDS)_{n+1}$ and  $(TDS)_{n+1}$
 were shown  to be isomorphic to subgroups of the affine group, and it was shown that their metaplectic representations and wavelet
representations have equivalent subrepresentations. 
In \cite{Namngam}, it was demonstrated that these two groups fall into the class of groups 
 of the form  (\ref{eq:01}), where $D$ is a one-parameter group
\[
	D= D_{p, B}:= \{ \,  d( e^{pt} ,e^{Bt},1) : t \in \mathbb{R} \, \}
\]
for some fixed $p \in \mathbb{R}$ and $B \in M_{n}(\mathbb{R})$, the latter  not similar to a skew-symmetric matrix
in case $p=0$. 
Furthermore, all groups of form $ \mathbb{H}_{pol}^n \rtimes D_{p,B}$ were classified up to isomorphism, 
and were shown to be isomorphic to subgroups of both,
the symplectic group  $Sp(n+1,\mathbb{R})$ as well as the affine group $\textit{Aff}(n+1,\mathbb{R})$.
In addition, it was shown that the metaplectic representation splits into two subrepresentations,
each of which is equivalent to a subrepresentation of the wavelet representation.

In this paper, we continue the study of semi-direct products of type $ \mathbb{H}_{pol}^n \rtimes D_{p,B}$, 
where  $D_{p,B}$ is now a two-parameter group.  After having introduced these groups,
we classify them up to isomorphism by analyzing their Lie algebras.
We then proceed at showing that they are isomorphic to subgroups of $Sp(n+1,\R)$ as well as
$\textit{Aff}(n+1,\R)$, and study their metaplectic and wavelet representations.


\section{Extensions of the Heisenberg Group}

\subsection{The groups $G_{p,B}$}
For given fixed numbers $p_1, p_2 \in \mathbb{R}$ and fixed commuting matrices $B_1, B_2 \in M_n(\mathbb{R})$, let us set
\[		
		p:=(p_1, p_2) \qquad \text{and} \qquad B:=(B_1, B_2).	
\]
We also set 
\[
 D_{p,B} = \left\{ {d(t): =\left[ {\begin{array}{*{20}c}
   e^{pt} & 0 & 0  \\
   0 & e^{Bt} & 0  \\
   0 & 0 & 1  \\
 \end{array} } \right] \ :\  t \in \mathbb{R}^2} \right\}
\]
where $pt$ and $Bt$ denote ''scalar'' products,
\[	\qquad
     pt = p_{1}t_{1}+p_{2}t_{2},\;\; Bt=B_{1}t_{1}+B_{2}t_{2} \quad 
										\qquad (t=\trp{(t_1,t_2)} \in \mathbb{R}^2 ).
\]
Then $D_{p,B}$ is an abelian (not necessarily closed) subgroup of $ GL_{n+2}(\mathbb{R})$. 
Conjugation by elements of $D_{p,B}$ naturally defines a continuous action $\alpha$ of $\mathbb{R}^2$ on $\mathbb{H}^{n}_{pol}$ by
\begin{equation}\label{eq:5}
			\alpha_{t} \bigl (  \, h(x,y,z) \,  \bigr ) 
				:= d(t)\;h(x,y,z)\; d(t)^{-1}
		 = 	h \bigl  (e^{Bt} x, e^{pt} [e^{-Bt}]^{T}  y,e^{pt} z \bigr ) .
\end{equation}
We can thus form the semidirect product 
\[	
			G_{p,B} := \mathbb{H}_{pol}^{n} \rtimes_{\alpha} \mathbb{R}^{2}.	
\]
The group operation is given by
\begin{align*}
	\left ( h(x,y,z) ,t \right ) \left ( h(\tilde x,\tilde y,\tilde z) , \tilde t \right ) 
		&= \left ( h  (x,y,z ) \, \alpha_t \left (   h(\tilde x,\tilde y,\tilde z)  \right ), t+\tilde t \right ) \\
		&= \left ( h  (x+ e^{Bt} \tilde x,y+ e^{pt} [e^{-Bt}]^{T}  \tilde y,z +e^{pt} \tilde z +y^{T} e^{Bt} x ) , t+\tilde t \right ) .
\end{align*}
Alternatively, we may represent elements of $G_{p,B}$ as quadruples $g(t,x,y,z)$, in this case the group operation becomes
\begin{equation}\label{eq:4}
g(t, x, y,z) g(\tilde t, \tilde x, \tilde y, \tilde z) 
				=  g(t + \tilde t, x+ e^{Bt} \tilde x,y+ e^{pt} [e^{-Bt}]^{T} \tilde y,z + e^{pt} \tilde z + y^{T}e^{Bt} \tilde{x}).
\end{equation}

\subsection{The groups $G_{p,B}$ as closed subgroups of $GL_{n+2}(\mathbb{R})$}

 Observe that each group $G_{p,B}$ is isomorphic and homeomorphic to the matrix group
\[		G_{p,B} \simeq \left \{ \, \tilde g(t,x,y,z)=
				\begin{bmatrix} 
						 \left [  \begin{smallmatrix} 
						                     e^{pt} &  y^{T}  e^{Bt}  &   z  \\ 
											 0 & e^{Bt} & x \\[4pt] 
											 0 & 0 & 1 
								\end{smallmatrix} \right ]  & 0\\
								
							0 &  \left [ \begin{smallmatrix} e^{t_1} &  0 \\
											 0 & e^{t_2} 
								\end{smallmatrix}  \right]
				\end{bmatrix}  \, : \, \begin{array}{l} t = \trp{(t_1,t_2)} \in \mathbb{R}^2 \\ x,y \in \mathbb{R}^n\\ z\in \mathbb{R} \end{array}
				  \right \} \subset GL_{n+4}(\mathbb{R}).
\]
This representation of $G_{p,B}$ is  not very useful. Under some mild assumptions on the matrices $B_1$ and $B_2$
it is, however, possible to identify $D_{p,B}$ with $\R^2$, and hence $G_{p,B}$ with a closed subgroup of $GL_{n+2}(\mathbb{R})$.  The main ingredient is the proof of  Lemma 11 in \cite{Bruna} which shows the following:
\begin{lem}\label{rem:2}
Let $M_1$ and $M_2$ be commuting $d \times d$ matrices, and suppose that
\begin{enumerate}
\setlength{\parskip}{0pt}
\setlength{\itemsep}{1pt}
	\item[(M1)] $M_1, M_2$ are linearly independent, and
	\item[(M2)] no nonzero element of $ V_M:=\text{span}(M_1, M_2)$  is similar to a skew-symmetric matrix.
\end{enumerate}
Then the exponential map $exp: M \mapsto e^M$ is an isomorphism and homeomorphism of the additive group $V_M$ onto a closed subgroup of $GL_d(\mathbb{R})$.
\end{lem}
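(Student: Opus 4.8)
The plan is to treat the map $\exp\colon M\mapsto e^M$ as a continuous homomorphism of the additive group $(V_M,+)$ into $GL_d(\mathbb{R})$: this is legitimate since any two elements of $V_M$ are linear combinations of the commuting matrices $M_1,M_2$ and hence commute, so that $e^{M+M'}=e^M e^{M'}$. The statement then reduces to two facts, namely that $\exp$ is (i) injective and (ii) proper, in the sense that the preimage of every compact subset of $GL_d(\mathbb{R})$ is compact. Granting these, $\exp$ is a continuous injective proper map between locally compact Hausdorff spaces, hence a closed map, hence a homeomorphism onto its image; and the image $\exp(V_M)$, being the image of the closed set $V_M$ under a closed map, is a closed subgroup of $GL_d(\mathbb{R})$. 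This is exactly the assertion of the lemma.

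For (i), suppose $e^M=e^{M'}$ with $M,M'\in V_M$. Since $M$ and $M'$ commute this gives $e^{M-M'}=I$, so it suffices to prove that $e^M=I$ forces $M=0$ for every $M\in V_M$. Comparing multiplicative Jordan decompositions, $e^M=I$ forces the nilpotent part of $M$ to vanish and the semisimple part to have all eigenvalues in $2\pi i\mathbb{Z}$; thus $M$ is semisimple with purely imaginary spectrum. A real matrix of this type is similar over $\mathbb{R}$ to a block-diagonal matrix built from $2\times2$ blocks $\left[\begin{smallmatrix}0&-\theta\\ \theta&0\end{smallmatrix}\right]$ and zero blocks, that is, to a skew-symmetric matrix. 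By hypothesis (M2) this is impossible unless $M=0$, which proves injectivity.

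The real work, and the step I expect to be the main obstacle, is (ii). Assume it fails: there is a sequence $t_k=(t_{k,1},t_{k,2})$ with $\|t_k\|\to\infty$ for which $e^{M(t_k)}$, where $M(t):=t_1M_1+t_2M_2$, remains in a fixed compact set $K\subset GL_d(\mathbb{R})$. Normalizing $s_k:=t_k/\|t_k\|$ and passing to a subsequence, we may assume $s_k\to s$ with $\|s\|=1$. Because $M_1,M_2$ commute, the eigenvalues of $M(t)$ are $\lambda_j(t)=t_1\mu_j+t_2\nu_j$ for the joint spectrum $(\mu_j,\nu_j)$, and each $\lambda_j$ is $\mathbb{R}$-linear in $t$. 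Since the operator norm dominates the spectral radius, $\|e^{M(t_k)}\|\ge e^{\max_j\operatorname{Re}\lambda_j(t_k)}$ and $\|e^{-M(t_k)}\|\ge e^{-\min_j\operatorname{Re}\lambda_j(t_k)}$; compactness of $K$ bounds both left-hand sides, so $\operatorname{Re}\lambda_j(t_k)$ is bounded for every $j$. Dividing by $\|t_k\|\to\infty$ yields $\operatorname{Re}\lambda_j(s)=0$ for all $j$, i.e.\ $M(s)$ has purely imaginary spectrum.

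It remains to force the nilpotent part of $M(s)$ to vanish, after which $M(s)$ is semisimple with purely imaginary spectrum, hence similar to a skew-symmetric matrix, contradicting (M2) since $s\ne0$. To this end I would use the simultaneous Jordan--Chevalley splitting $M(t)=S(t)+N(t)$ of the commuting family, with $S(t)$ semisimple, $N(t)$ nilpotent, $S$ and $N$ commuting and linear in $t$, so that $e^{M(t_k)}=e^{S(t_k)}e^{N(t_k)}$. Restricting to a joint generalized eigenspace $W$ on which $M(t)$ acts as $\lambda(t)I+N^W(t)$ gives $\bigl\|e^{M(t_k)}|_W\bigr\|=e^{\operatorname{Re}\lambda(t_k)}\,\bigl\|e^{N^W(t_k)}\bigr\|$. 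If $N(s)\ne0$ then $N^W(s)\ne0$ for some such $W$; writing $N^W(t_k)=\|t_k\|\,N^W(s_k)$ and expanding the finite exponential series, the top-degree term $\tfrac{1}{m!}\|t_k\|^{m}N^W(s_k)^{m}$, whose coefficient converges to $\tfrac{1}{m!}N^W(s)^{m}\ne0$, dominates, so $\bigl\|e^{N^W(t_k)}\bigr\|\to\infty$; together with the lower bound on $\operatorname{Re}\lambda(t_k)$ from the previous paragraph this gives $\|e^{M(t_k)}\|\to\infty$, contradicting $e^{M(t_k)}\in K$. Hence $N(s)=0$, and the proof closes as indicated. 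The delicate point is precisely this last estimate — keeping control of $\bigl\|e^{N^W(t_k)}\bigr\|$ while the direction $s_k$ only converges to $s$ — which is why the argument is arranged so that the nonvanishing $N^W(s)^{m}\ne0$ is read off from the single limiting direction $s$.
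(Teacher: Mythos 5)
The paper itself offers no proof of this lemma --- it is imported wholesale from Lemma 11 of the cited work of Bruna, Cuf\'{\i}, F\"uhr and Mir\'o --- so you are reconstructing the argument from scratch. Your architecture is the right one: since $V_M$ consists of commuting matrices, $\exp$ is a continuous homomorphism of $(V_M,+)$ into $GL_d(\mathbb{R})$, and it suffices to prove injectivity and properness. The injectivity half is complete: Jordan--Chevalley reduces $e^M=I$ to $M$ semisimple with spectrum in $2\pi i\mathbb{Z}$, a real semisimple matrix with purely imaginary spectrum is similar over $\mathbb{R}$ to a skew-symmetric one, and (M2) then gives $M=0$. The spectral-radius estimate showing $\operatorname{Re}\lambda_j(t_k)$ bounded, hence $\operatorname{Re}\lambda_j(s)=0$, is also fine, and (M1) is correctly (if tacitly) used at the end to conclude $M(s)\neq 0$.

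The gap is exactly where you flagged it. The claim that $\frac{1}{m!}\|t_k\|^{m}N^W(s_k)^{m}$ dominates the exponential series is false in general: for $j>m$ the terms $\frac{1}{j!}\|t_k\|^{j}N^W(s_k)^{j}$ are products of a factor tending to $0$ with one tending to $\infty$ and can be arbitrarily large. Concretely, take $d=3$, $N_1=E_{13}$ and $N_2=E_{12}+E_{23}$ (elementary matrices; these commute, and $N(t)^2=t_2^2E_{13}$), and $t_k=(k,k^{9/10})$: then $s_k\to s=(1,0)$ and $m=1$, but the $j=2$ term $\tfrac{1}{2}t_{k,2}^{2}E_{13}$ has norm of order $k^{9/5}$ and swamps the $j=1$ term, whose norm is of order $k$. (Your conclusion $\|e^{N^W(t_k)}\|\to\infty$ is in fact always true, but not for the reason you give, and a priori one must also exclude cancellation among the terms.) The clean repair avoids the series entirely. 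Since all $\operatorname{Re}\lambda_j(t_k)$ are bounded and $S(t)$ is block-scalar in a fixed basis adapted to the joint generalized eigenspaces, both $\|e^{S(t_k)}\|$ and $\|e^{-S(t_k)}\|$ are bounded, hence $e^{N(t_k)}=e^{-S(t_k)}e^{M(t_k)}$ is bounded. On unipotent matrices the logarithm is the fixed polynomial $N=\sum_{j=1}^{d-1}\frac{(-1)^{j+1}}{j}\left(e^{N}-I\right)^{j}$, so boundedness of $e^{N(t_k)}$ forces boundedness of $N(t_k)$, whence $N(s_k)=N(t_k)/\|t_k\|\to 0$ and $N(s)=0$. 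Then $M(s)=S(s)$ is semisimple with purely imaginary spectrum and nonzero by (M1), contradicting (M2). With this substitution your proof is complete.
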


Let us now set
\[		 M_k= \begin{bmatrix}	p_k & 0 & 0 \\ 0 & B_k & 0 \\ 0 & 0 & 0 \end{bmatrix}, \qquad (k=1,2),	\]
and assume from here onwards that the matrices $M_1$ and $M_2$ satisfy the conditions (M1)--(M2).
(This certainly is the case when $p_1=1$, $p_2=0$ and $B_2$ is not similar to a
skew-symmetric matrix. When $p_1=p_2=0$, this is the case if and only if $B_1$ and $B_2$ satisfy (M1)--(M2).)
Applying Lemma \ref{rem:2} we immediately obtain that
the map $t \mapsto d(t)$ is an isomorphism and homeomorphism of $\mathbb{R}^2$ onto $D_{p,B}$ and that $D_{p,B}$ 
is closed in $GL_{n+2}(\mathbb{R})$, and hence by (\ref{eq:5}),
\begin{equation}\label{eq:8}	G_{p,B} \simeq \left \{ \ g(t,x,y,z)=
				\begin{bmatrix} 
						  e^{pt} &  y^{T}e^{Bt}  &   z  \\ 
											 0 & e^{Bt} & x \\ 
											 0 & 0 & 1 
				\end{bmatrix}  \ : \ \begin{array}{l} t  \in \mathbb{R}^2 \\ x,y \in \mathbb{R}^n \\ z\in \mathbb{R} \end{array}
				 \ \right \} \subset GL_{n+2}(\mathbb{R}).
\end{equation}

\section{Classification of the Groups $G_{p,B}$}
Observe that under assumptions (M1) and (M2), each group $G_{p,B}$ is a connected, simply connected Lie group.
Thus, two groups $G_{p,B}$ and $G_{\tilde p,\tilde B}$ will be isomorphic if and only their Lie algebras  $\mathfrak{g}_{p,B}$ and  $\mathfrak{g}_{\tilde p, \tilde B}$  are isomorphic.

\subsection{The Lie algebras $\mathfrak{g}_{p,B}$}
Standard computations show that each Lie algebra $\mathfrak{g}_{p,B}$ is of dimension $2n+3$ and isomorphic to the matrix subalgebra of $M_{n+2}(\R)$ of the form
\[			\mathfrak{g}_{p,B} = V_{M} \oplus V_H = \underbrace{V_{M_1} \oplus V_{M_2}}_{V_M}
					 \oplus \underbrace{ \overbrace{V_X \oplus V_Y}^{V_W} \oplus V_Z}_{V_H},
\]
where $V_M$ is a $2-$dimensional abelian subalgebra, $V_H$ is the $2n+1-$dimensional Heisenberg algebra, and
\[				V_{M_1} = \{ s M_1: s \in \R \} , \qquad\qquad V_{M_2} = \{ t M_2 : t \in \R \},
\]
\[	
		V_X = \{  X_x : x  \in \R^n \},   \quad\qquad V_Y = \{  Y_y : y  \in \R^n \}, \quad\qquad V_Z = \{  Z_z : z  \in \R \}  
\]
with
\[
		X_x = \begin{bmatrix}	0 & 0 & 0 \\ 0 & 0 & x \\ 0 & 0 & 0 \end{bmatrix}, \qquad
		Y_y = \begin{bmatrix}	0 & y^{\transp} & 0 \\ 0 & 0 & 0 \\ 0 & 0 & 0 \end{bmatrix}, \qquad
		Z_z = \begin{bmatrix}	0 & 0 & z \\ 0 & 0 & 0 \\ 0 & 0 & 0 \end{bmatrix}.
\]
The only possibly nonzero Lie brackets are determined by
\begin{equation}\label{eq:9}
		[M_k,X_x]=X_{B_k x}, \quad [M_k,Y_y]=Y_{(p_kI-B_k^{\transp}) y}, \quad [M_k,Z_z]=Z_{p_k z}, \quad [Y_y,X_x] = Z_{y^{\transp} x},	
\end{equation}
$k=1,2$. For the purpose of classifying this type of Lie algebras we do not require condition (M2).
Observe that these Lie algebras are solvable, and that $V_H$ is an ideal of the nilradical.

It will be convenient to   denote elements $X_x + Y_y$ of $V_W$ 
by $W_w$, where $w= \left [ \begin{smallmatrix} x \\ y \end{smallmatrix} \right ] $. In this notation,  
some of the Lie brackets in (\ref{eq:9}) become
\begin{equation*}
	[W_w,W_{\tilde w}] = [ X_x + Y_y, X_{\tilde x}+ Y_{\tilde y} ]
			=  Z_{y^{\transp} \tilde x + \tilde y^\transp x}   = Z_{ \llbracket w,  \tilde w \rrbracket },
\end{equation*}
and also
\begin{equation}\label{eq:9a}
	[M_k,W_w] =[M_k,X_x] + [M_k,Y_y] = X_{B_k x} + Y_{(p_k I_n-B_k^{\transp}) y} = W_{C_kw}
\end{equation}
with
\begin{equation}\label{eq:10}
			C_k= \begin{bmatrix}  B_k & 0 \\ 0 & p_k I_n -B_k^{\transp} \end{bmatrix} \in M_{2n}(\R).
\end{equation}

\bigskip
The following lemma is probably well known.

\begin{lem}\label{lem1}
Let a triple $(\lambda,u,S)$ be given, where $\lambda >0$,  $u \in \R^{2n}$, and $S \in GL_{2n}(\R)$ 
satisfies $S^{\transp} \mathcal{J} S= \pm \mathcal{J}$.  Then
\begin{equation}\label{eq:10c}
		\qquad\qquad\qquad
		\Phi( W_w ) = W_{\lambda Sw} + Z_{u^{\transp} w} \quad \text{and} \quad \Phi( Z_z ) = Z_{ \pm \lambda^2 z}	
		\qquad\qquad  (W_w \in V_W, \, Z_z \in V_Z) 
\end{equation}
defines an automorphism of the Heisenberg algebra $V_H$. 
Conversely, every automorphism of $V_H$ is of this form.
\end{lem}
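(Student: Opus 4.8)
The plan is to prove the two directions separately, using only the bracket relations in (\ref{eq:9}) together with the fact that $V_Z$ is central.

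For the forward direction I would first note that $\Phi$ is linear by construction, so it remains to check that it preserves brackets and is bijective. Since $V_Z$ is central, the sole nontrivial bracket to test is $[W_w,W_{\tilde w}]=Z_{\llbracket w,\tilde w\rrbracket}$. Applying $\Phi$ to the left side yields $Z_{\pm\lambda^2\llbracket w,\tilde w\rrbracket}$, while $[\Phi(W_w),\Phi(W_{\tilde w})]$ reduces, after discarding the central $Z$-terms, to $[W_{\lambda Sw},W_{\lambda S\tilde w}]=Z_{\lambda^2\,\trp{w}\trp{S}\mathcal{J}S\tilde w}$; the hypothesis $\trp{S}\mathcal{J}S=\pm\mathcal{J}$ then makes the two sides agree. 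For bijectivity I would read off the matrix of $\Phi$ in the splitting $V_W\oplus V_Z$: it is block triangular with diagonal blocks $\lambda S$ and $\pm\lambda^2$, so its determinant is $\pm\lambda^{2n+2}\det S\neq0$, and $\Phi$ is invertible, hence an automorphism.

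For the converse I would begin by identifying the center of $V_H$. Because $\mathcal{J}$ is nondegenerate, $W_w$ commutes with every $W_{\tilde w}$ only when $w=0$, so the center is exactly $V_Z$. Any automorphism $\Phi$ therefore preserves $V_Z$, giving $\Phi(Z_z)=Z_{cz}$ for some scalar $c\neq0$, and it descends to a linear automorphism of $V_H/V_Z\cong V_W$, which I denote $T\in GL_{2n}(\R)$. Consequently $\Phi(W_w)=W_{Tw}+Z_{\ell(w)}$ for a linear functional $\ell$, and I record $\ell(w)=\trp{u}w$ for a unique $u\in\R^{2n}$. Feeding $[W_w,W_{\tilde w}]$ through the homomorphism identity gives $Z_{c\llbracket w,\tilde w\rrbracket}$ on one side and $Z_{\llbracket Tw,T\tilde w\rrbracket}$ on the other, whence $\trp{T}\mathcal{J}T=c\mathcal{J}$. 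Setting $\lambda:=\sqrt{|c|}>0$ and $S:=\lambda^{-1}T$ then yields $\trp{S}\mathcal{J}S=\mathrm{sgn}(c)\mathcal{J}=\pm\mathcal{J}$ and $c=\mathrm{sgn}(c)\lambda^2=\pm\lambda^2$, which is precisely the form (\ref{eq:10c}).

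The only place demanding care is the bookkeeping of signs: one must verify that the single quantity $\mathrm{sgn}(c)$ simultaneously controls both $\trp{S}\mathcal{J}S=\pm\mathcal{J}$ and $\Phi(Z_z)=Z_{\pm\lambda^2 z}$, so that the two occurrences of ``$\pm$'' in (\ref{eq:10c}) are forced to be the same sign rather than independent choices. Once this is observed, the remaining steps are routine linear algebra made possible by the centrality of $V_Z$ and the nondegeneracy of $\mathcal{J}$.
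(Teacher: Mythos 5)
Your proposal is correct and follows essentially the same route as the paper: verify bracket preservation via $\trp{S}\mathcal{J}S=\pm\mathcal{J}$ for the forward direction, and for the converse use invariance of the center $V_Z$ to put $\Phi$ in block-triangular form, then normalize by $\lambda=\sqrt{|c|}$ to extract $S$. You merely make explicit two points the paper leaves implicit (that the center is exactly $V_Z$ by nondegeneracy of $\mathcal{J}$, and the determinant computation for bijectivity), which is fine.
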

\begin{proof}
It is clear that the linear map $\Phi$ defined by (\ref{eq:10c}) constitutes a linear
automorphism of $V_H$. On the other hand, by assumption on $S$ we have for all $w, \tilde w \in \R^{2n}$,
\begin{equation}\label{eq:10d}
\begin{split}	
		\left [\Phi( W_{w}), \Phi(W_{\tilde w}) \right ]
		& =  \left [ W_{\lambda Sw} +  Z_{u^{\transp} w }, W_{\lambda S \tilde w} +  Z_{u^{\transp} \tilde w} \right ] 
			= Z_{\llbracket \lambda Sw, \lambda S \tilde w \rrbracket }   \\
		&	= Z_{ \pm \lambda^2 \llbracket w,  \tilde w \rrbracket } 
			= \Phi \left ( Z_{\llbracket w,  \tilde w \rrbracket } \right )
			= \Phi \left ( \left [ W_{w}, W_{\tilde w} \right ] \right ),
\end{split}
\end{equation}
and it follows that $\Phi$   preserves the Lie brackets. 

Conversely, let $\Phi$ be a Lie algebra automorphism of $V_H$. In light of the decomposition $V_H = V_W \oplus V_Z$
and since $\Phi$ leaves the center $V_Z$ invariant, $\Phi$ has a matrix representation
\[			
			\Phi  \leftrightarrow \begin{bmatrix}  a_{11} & 0 \\ a_{21} & a_{22 } \end{bmatrix}
\]
where $a_{11} \in GL_{2n}(\R)$ and $a_{22} \neq 0$. Computing as in (\ref{eq:10d}) we have for all $w,\tilde w \in \R^{2n}$,
\begin{equation*}
\begin{split}	
		 Z_{ a_{22} \llbracket  w,  \tilde w \rrbracket }  
		 &= \Phi \left ( Z_{\llbracket w,  \tilde w \rrbracket } \right )
		 = \Phi \left ( \, \left [ W_{w}, W_{\tilde w} \right ] \, \right )
		 = \left [\Phi( W_{w}), \Phi(W_{\tilde w}) \right ] \\
		& =  \left [ W_{ a_{11}w } +  Z_{ a_{21}  w }, W_{a_{11}\tilde w} +  Z_{a_{21} \tilde w} \right ] 
			= Z_{\llbracket a_{11}w, a_{11} \tilde w \rrbracket }   \\
\end{split}
\end{equation*}	
Set $\lambda = \sqrt{ |a_{22}| }$, $S = \frac{1}{\lambda} a_{11}$ and  $u =a_{21}^{\transp}$.
Then $ \llbracket Sw, S \tilde w \rrbracket = \text{sgn}(a_{22})  \llbracket w,  \tilde w \rrbracket $, that is
$ S^{\transp} \mathcal J S = \text{sgn}(a_{22}) \mathcal{J} $, and the assertion follows.
\end{proof}

\subsection{Classification of the the Lie algebras $\mathfrak{g}_{p,B}$}

Let us first introduce some normalization to the class of Lie algebras $\mathfrak{g}_{p,B}$.
Given two algebras $\mathfrak{g}_{p,B}$ and $\mathfrak{g}_{\tilde p, \tilde B}$, their Heisenberg parts
are identical, so we will  use the same symbol $V_H$ to denote the two, and  the
remaining component spaces will be denoted by $V_M$ and $V_{\tilde M}$, respectively.

\begin{thm}\label{thm:1}
If any of the following properties hold, then two Lie algebras
 $\mathfrak{g}_{p,B}$ and $\mathfrak{g}_{\tilde p, \tilde B}$  are isomorphic:
\begin{enumerate}
\item   $\tilde p = p$ and there exists  $S \in Sp(n,\R)$ so that 
				\[		\tilde C_k= S C_k S^{-1}  \qquad \qquad (k=1, 2) ,	\]
				with $C_k$ and $\tilde C_k$ given as in (\ref{eq:10}).
\item   $\tilde p = p$ and there exists   $V \in GL_{n}(\R)$ so that 
				\[		\tilde B_k= V B_k V^{-1}  \qquad \qquad (k=1, 2) .	\]
\item   Each $\tilde M_i$ is a linear combination of $M_1$ and $M_2$,
		\[	\tilde M_i = a_{i1} M_1 + a_{i2} M_2	\qquad \qquad (i=1, 2) \]
		with $\det(A) \neq 0$ where $A= \left[ \begin{smallmatrix} a_{11} & a_{22} \\ a_{21} & a_{22} \end{smallmatrix} \right ] $.
\item 	There exists $\alpha \neq 0$ so that $\tilde M_k = \alpha M_k$ for $k=1, 2$.
\end{enumerate}
\end{thm}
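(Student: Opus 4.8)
The plan is to handle all four cases through one template: build a linear isomorphism $\Phi : \mathfrak{g}_{p,B} \to \mathfrak{g}_{\tilde p, \tilde B}$ that splits as $\Phi = \Phi_M \oplus \Phi_H$, where $\Phi_H$ is an automorphism of the common Heisenberg part $V_H$—so that Lemma~\ref{lem1} guarantees it respects every bracket internal to $V_H$—and $\Phi_M$ carries $V_M$ linearly onto $V_{\tilde M}$. Because $V_M$ is abelian in both algebras and $\Phi_H$ already takes care of $[W_w,W_{\tilde w}]$, $[W_w,Z_z]$ and the central brackets, the only relations that can obstruct $\Phi$ from being a Lie isomorphism are the mixed ones, $[M_k,W_w]=W_{C_k w}$ and $[M_k,Z_z]=Z_{p_k z}$ from (\ref{eq:9})--(\ref{eq:9a}). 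Thus in each case the verification collapses to matching these two families of brackets.

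For part (1) I would take $\Phi_H$ from Lemma~\ref{lem1} with $\lambda=1$, $u=0$ and the given $S$: since $S\in Sp(n,\R)$ satisfies $S^{\transp}\mathcal{J}S=\mathcal{J}$, the map $\Phi_H(W_w)=W_{Sw}$, $\Phi_H(Z_z)=Z_z$ is a genuine automorphism of $V_H$. Setting $\Phi_M(M_k)=\tilde M_k$, preservation of $[M_k,Z_z]$ holds precisely because $\tilde p=p$, while preservation of $[M_k,W_w]$ amounts to $S C_k = \tilde C_k S$, which is exactly the hypothesis $\tilde C_k=SC_kS^{-1}$. Part (2) I would then reduce to part (1) by exhibiting a suitable symplectic $S$. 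The natural candidate is the block matrix $S=\left[\begin{smallmatrix} V & 0 \\ 0 & (V^{\transp})^{-1}\end{smallmatrix}\right]$; a short computation gives $S^{\transp}\mathcal{J}S=\mathcal{J}$, so $S\in Sp(n,\R)$. Conjugating the block form (\ref{eq:10}) of $C_k$ by $S$ produces $VB_kV^{-1}$ in the upper block and $p_k I_n-(V^{\transp})^{-1}B_k^{\transp}V^{\transp}$ in the lower block; invoking $\tilde p=p$ together with $\tilde B_k=VB_kV^{-1}$ (transposed for the lower block) identifies the result with $\tilde C_k$, so $SC_kS^{-1}=\tilde C_k$ and part (1) applies.

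Parts (3) and (4) are essentially trivial at the level of subalgebras of $M_{n+2}(\R)$. Since $\det(A)\neq 0$ (respectively $\alpha\neq 0$), the matrices $\tilde M_1,\tilde M_2$ span the same plane as $M_1,M_2$, whence $V_{\tilde M}=V_M$ and therefore $\mathfrak{g}_{\tilde p,\tilde B}=\mathfrak{g}_{p,B}$ as subalgebras of $M_{n+2}(\R)$; the identity map is the required isomorphism. Part (4) is simply the special case $A=\alpha I_2$ of part (3).

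The only step demanding real care is the block-matrix bookkeeping in part (2): verifying the symplectic identity for $S$ and, above all, tracking the transposes in the lower block of $SC_kS^{-1}$ so that it indeed coincides with $p_k I_n-\tilde B_k^{\transp}$. In part (1) the subtle point is merely to confirm that the chosen $\Phi_H$ satisfies the hypotheses of Lemma~\ref{lem1} exactly (so that it qualifies as a Heisenberg automorphism); everything else is routine bracket-matching, and parts (3)--(4) require no computation at all.
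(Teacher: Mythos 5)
Your proposal is correct and follows essentially the same route as the paper: the same map $\Phi(M_k)=\tilde M_k$, $\Phi(W_w)=W_{Sw}$, $\Phi(Z_z)=Z_z$ with Lemma~\ref{lem1} handling the brackets internal to $V_H$, the same choice $S=\left[\begin{smallmatrix} V & 0 \\ 0 & (V^{\transp})^{-1}\end{smallmatrix}\right]$ reducing (2) to (1), and the same observation that (3) and (4) are mere changes of basis of $V_M$ leaving the subalgebra unchanged. Your block-matrix verification that $SC_kS^{-1}=\tilde C_k$ in part (2) is a detail the paper leaves implicit, but it checks out.
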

\begin{proof}
\begin{enumerate}
\item Define a linear isomorphism 
		$\Phi :  \mathfrak{g}_{p, B} \to  \mathfrak{g}_{\tilde p, \tilde B}$ by
\[	
				\Phi(M_k) = \tilde M_k, \qquad \Phi(W_w) = W_{Sw}, \qquad   \Phi(Z_z) = Z_z .
\]
In light of Lemma \ref{lem1} one only needs to verify that Lie brackets involving
the matrices $M_k$ are preserved. This is indeed the case, as  by (\ref{eq:9a}), for all $k=1,2$,
\[	\qquad\quad
	\left [ \Phi(M_k), \Phi( W_w) \right ] 
			= \ [ \tilde M_k,  W_{Sw}   ] 
			=   W_{\tilde C_k Sw} = W_{ S C_k w} = \Phi \left (W_{C_k w} \right ) = \Phi \left ( [M_k,W_w] \right )
\]
and
\[
	\left [ \Phi(M_k), \Phi( Z_z) \right ] = \ [ \tilde M_k, Z_z   ] 
			=  Z_{\tilde p_kz}  = Z_{p_kz} = \Phi \left ( [M_k,Z_z] \right ).
\]
\item Simply apply the above to 
		\[  S = \begin{bmatrix}   V & 0 \\ 0  & \left (V^{-1}\right )^{\transp} \end{bmatrix} .	\]
\item  This is merely a change of basis of the subalgebra $V_M$, and hence both Lie algebras coincide.
\item  This is a particular change of basis, choosing $a_{ik} = \alpha \delta_{i,k}$.
\end{enumerate}
\end{proof}

Replacing the matrices $M_1, M_2$ (and consequently $B_1, B_2$) with appropriate linear combinations, by Theorem \ref{thm:1}, we may  from here on
assume that $p_1 \in \{0,1\}$ and $p_2=0$. After this normalization, we aim to give a converse of Theorem \ref{thm:1}. 

\begin{rem}\label{rem:1} If two normalized Lie algebras $\mathfrak{g}_{p,B}$, and $ \mathfrak{g}_{\tilde p, \tilde B}$ are isomorphic, then $p_1 = \tilde p_1$ (i.e. $p=\tilde p$). In fact, if $ \Phi : \mathfrak{g}_{p,B} \mapsto \mathfrak{g}_{\tilde p, \tilde B}$  is a Lie algebra isomorphism, then $\Phi$ maps center onto center. Since $\mathfrak{g}_{ p,  B}$ has trivial center when $p_1=1$, and center $V_Z$ when $p_1=0$, it immediately follows that $p_1=\tilde p_1$.
\end{rem}

This remark shows that the normalized Lie algebras $\mathfrak{g}_{p,B}$ need only be classified with respect to the various 
choices of $B$.

\begin{thm}\label{thm:2}

Let  $ \Phi : \mathfrak{g}_{p,B} \mapsto \mathfrak{g}_{p, \tilde B}$ be an isomorphism of normalized Lie algebras mapping $V_H$ onto $V_H$. 
Then there exists $S \in Sp(n,\R)$ so that, after replacing the matrices  $\tilde M_1, \tilde M_2$
			with suitable linear combinations,
				\begin{equation*}
					\tilde C_k 	= S C_k S^{-1}, \qquad k=1, 2,
				\end{equation*}
				with $C_k$ and $\tilde C_k$ given as in (\ref{eq:10}).
\end{thm}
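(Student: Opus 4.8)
The plan is to restrict $\Phi$ to the ideal $V_H$, feed the result into Lemma~\ref{lem1}, and then read off the action of $\Phi$ on the $M_k$ from the bracket relations~(\ref{eq:9a}). Since $\Phi$ maps $V_H$ onto $V_H$ by hypothesis, the restriction $\Phi|_{V_H}$ is an automorphism of the Heisenberg algebra, so Lemma~\ref{lem1} supplies $\lambda>0$, a vector $u\in\R^{2n}$, a sign $\epsilon=\pm 1$, and a matrix $S_0\in GL_{2n}(\R)$ with $S_0^{\transp}\mathcal{J}S_0=\epsilon\mathcal{J}$ such that $\Phi(W_w)=W_{\lambda S_0 w}+Z_{u^{\transp}w}$ and $\Phi(Z_z)=Z_{\epsilon\lambda^2 z}$.

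Next I would write $\Phi(M_k)=\tilde M_k^{*}+h_k$, where $\tilde M_k^{*}=a_{k1}\tilde M_1+a_{k2}\tilde M_2\in V_{\tilde M}$ and $h_k\in V_H$, and record that the matrix $A=(a_{kj})$ is invertible: because $V_H$ is an ideal preserved by $\Phi$, the induced map on the two-dimensional quotient $\mathfrak{g}_{p,B}/V_H\to\mathfrak{g}_{p,\tilde B}/V_H$ is an isomorphism whose matrix is exactly $A$. Applying $\Phi$ to $[M_k,W_w]=W_{C_k w}$ and expanding the right-hand side $[\tilde M_k^{*}+h_k,\,W_{\lambda S_0 w}+Z_{u^{\transp}w}]$ by means of (\ref{eq:9}) and (\ref{eq:9a}), every resulting bracket except $[\tilde M_k^{*},W_{\lambda S_0 w}]=W_{\tilde C_k^{*}\lambda S_0 w}$ lands in the center $V_Z$, where $\tilde C_k^{*}=a_{k1}\tilde C_1+a_{k2}\tilde C_2$. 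Comparing the $V_W$-components on both sides gives $\lambda S_0 C_k w=\lambda\tilde C_k^{*}S_0 w$ for all $w$, hence $\tilde C_k^{*}=S_0 C_k S_0^{-1}$ for $k=1,2$; note that $\lambda$ and $u$ drop out of this identity, the $V_Z$-comparison serving only to pin down $u$ and the central parts of $h_k$, which is automatic since $\Phi$ is assumed to be an isomorphism.

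It then remains to replace the possibly anti-symplectic matrix $S_0$ by a genuinely symplectic one, and this is the one genuinely nontrivial step. If $\epsilon=1$, then $S_0\in Sp(n,\R)$ and I simply take $S=S_0$. If $\epsilon=-1$, set $R=\left[\begin{smallmatrix} I_n & 0 \\ 0 & -I_n \end{smallmatrix}\right]$; a direct check gives $R^{\transp}\mathcal{J}R=-\mathcal{J}$, so $S:=S_0 R$ satisfies $S^{\transp}\mathcal{J}S=\mathcal{J}$, i.e. $S\in Sp(n,\R)$. The crucial observation is that, because each $C_k$ is block diagonal as in (\ref{eq:10}), conjugation by $R$ fixes it: $R C_k R=C_k$, the $B_k$ block being conjugated by $I_n$ and the $p_k I_n-B_k^{\transp}$ block by $-I_n$, both trivially. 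Using $R^{-1}=R$ it follows that $\tilde C_k^{*}=S_0 C_k S_0^{-1}=(SR)C_k(SR)^{-1}=S(R C_k R)S^{-1}=S C_k S^{-1}$, so in either case $\tilde C_k^{*}=S C_k S^{-1}$ with $S\in Sp(n,\R)$.

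Finally, since $A$ is invertible, replacing $\tilde M_1,\tilde M_2$ by the combinations $\tilde M_k^{*}$ is an admissible change of basis of $V_{\tilde M}$ (item~(3) of Theorem~\ref{thm:1}) that carries $\tilde C_k$ to $\tilde C_k^{*}$, yielding $\tilde C_k=S C_k S^{-1}$ as required. Apart from the $\epsilon=-1$ case, everything is bookkeeping with the brackets~(\ref{eq:9}); the identity $R C_k R=C_k$, which relies on the specific $X$--$Y$ block structure of $C_k$, is precisely what makes the anti-symplectic case collapse onto the symplectic one.
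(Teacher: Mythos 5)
Your proof is correct, and it reaches the conclusion by a more direct route than the paper, although both arguments share the same skeleton (Lemma~\ref{lem1} applied to $\Phi|_{V_H}$, then the bracket $[M_k,W_w]=W_{C_kw}$). The paper instead normalizes the matrix of $\Phi$ in stages: it first composes with an automorphism $\Psi$ of the target algebra to force the $V_Z$-block to be $1$, then performs a change of basis in $V_{\tilde M}$ (checked carefully to preserve the values of $p_k$, via the bracket with $Z_z$ when $p_1=1$) to force the $V_M$-block to be $I_2$, and finally writes out the complete list of compatibility conditions (\ref{eq:22})--(\ref{eq:23}) and reads off $S=E_{22}$. You skip the normalization entirely, extract only the $V_W$-component of $\Phi([M_k,W_w])=[\Phi(M_k),\Phi(W_w)]$ to get $\tilde C_k^{*}=S_0C_kS_0^{-1}$, and absorb the invertible matrix $A$ by the basis change of Theorem~\ref{thm:1}(3); this loses the paper's explicit bookkeeping of the remaining blocks $E_{21},E_{31},E_{32}$, but none of that is needed for the stated conclusion. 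The one place where the two proofs genuinely diverge is the anti-symplectic case: the paper composes with a $\Psi$ whose $V_W$-block is $\lambda\mathcal{J}$, whereas you right-multiply $S_0$ by $R=\left[\begin{smallmatrix} I_n & 0\\ 0 & -I_n\end{smallmatrix}\right]$ and use $RC_kR=C_k$. Your choice is in fact the more robust one: $R$ is genuinely anti-symplectic and visibly commutes with every block-diagonal $C_k$, so no extra compatibility with the $\tilde M_k$-brackets needs to be verified, while for the paper's $\Psi$ one would have to check that $\mathcal{J}$ commutes with the $\tilde C_k$ (it swaps the two diagonal blocks, so this is not automatic). In short: correct, same key lemma, leaner execution, and a cleaner treatment of the sign issue.
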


\begin{proof}
Suppose that $\Phi: V_H \mapsto V_H$. Then in light of Lemma \ref{lem1}, $\Phi$ has the matrix representation
\begin{equation}\label{eq:14}
		\Phi \leftrightarrow \begin{bmatrix} E_{11} & 0 & 0 \\ E_{21} & E_{22} & 0 \\ E_{31} & E_{32} & E_{33} \end{bmatrix},
\end{equation}
corresponding to the decomposition $ \mathfrak{g}_{ p,  B}=V_M \oplus V_W \oplus V_Z$.
Note that composing $\Phi$  with the automorphism $\Psi$ of $ \mathfrak{g}_{ p, \tilde B}$ given by the matrix
\[
	\Psi \leftrightarrow \begin{bmatrix} I_2 & 0 & 0 \\ 0 & \lambda \mathcal{J} & 0 \\ 0 & 0 & -\lambda^2 \end{bmatrix},
		 \qquad \text{resp.} \qquad
		\Psi \leftrightarrow \begin{bmatrix} I_2 & 0 & 0 \\ 0 & \lambda I_{2n} & 0 \\ 0 & 0 & \lambda^2 \end{bmatrix},
\]
depending on the sign of $E_{33}$ and with $\lambda = |E_{33}|^{-1/2}$, we may assume that $E_{33}=1$.

After a suitable change of basis in $V_{\tilde M}$, which  affects the first column of   matrix (\ref{eq:14}) only,
we may assume that $E_{11}=I_2$. 
It is important to observe that this change of basis can be achieved without changing the values of $p_k$.
This is clear when $p_1 =0$.
On the other hand, suppose that $p_1 =1$. Now if 
$E_{11}= \left [ \begin{smallmatrix} e_{11} &  e_{12} \\ e_{21} & e_{22} \end{smallmatrix} \right ]$, then 
$\Phi(M_k) =e_{1k} \tilde M_1 + e_{2k} \tilde M_2 + H_k$ for some $H_k \in V_H$ and it follows that for all $z \in \R$,
\begin{equation}\label{eq:15}
	\left [ \Phi(M_k), \Phi(Z_z) \right ] = e_{1k}  \left [  \tilde M_1 ,Z_{E_{33}z} \right ]  + e_{2k}  \left [  \tilde M_2 ,Z_{E_{33}z} \right ]
		+ \left[  H_k, Z_{E_{33}z} \right ] 
		= e_{1k} Z_{E_{33}z}
\end{equation}
while also
\begin{equation}\label{eq:15a}
	\left [ \Phi(M_k), \Phi(Z_z) \right ] = \Phi\left ( \left [ M_k, Z_z \right ] \right )
		= \Phi( \delta_{1,k} Z_z ) = \delta_{1,k} Z_{E_{33}z}.
\end{equation}
Comparing these two equations we obtain that 
\[
		e_{11}=\delta_{1,1}=1, \qquad  e_{12}=\delta_{1,2}=0.
\] 
Now replacing  $\tilde M_1$ with $\tilde M_1 + e_{21} \tilde M_2$ and then scaling $\tilde M_2$
we arrive at $E_{11}=I_2$, without changing the values of $ p_k$.
The isomorphism $\Phi$  now has the form
\[
		\Phi \leftrightarrow \begin{bmatrix} I_2 & 0 & 0 \\ E_{21} & E_{22} & 0 \\ E_{31} & E_{32} & 1 \end{bmatrix}
\]
with $E_{22} \in GL_{2n}(\R)$.

It is easy to verify that a linear isomorphism determined by such a matrix preserves the
Lie brackets if and only if
\begin{gather}
					\llbracket E_{22} w, E_{22} \tilde w \rrbracket  =  \llbracket w,\tilde w \rrbracket 	\label{eq:22} \\
					\tilde C_k 	= E_{22} C_k E_{22}^{-1} 				\label{eq:23}\\
				  	\tilde C_1 E^{(2)}_{21} = \tilde C_2 E^{(1)}_{21}			\notag \\
				  	p_1 E^{(2)}_{31} + \llbracket E^{(1)}_{21}, E^{(2)}_{21} \rrbracket = p_2 E^{(1)}_{31} 	\notag	\\
				  	\llbracket E^{(k)}_{21}, w \rrbracket = E_{32}E_{22}^{-1} w	\notag
\end{gather}
for $k=1, 2$ and $w, \tilde w \in \R^{2n}$,  with $E^{(k)}_{21}$ and  $E^{(k)}_{31}$ denoting the $k$-th
columns of the matrices $E_{21}$ and $E_{31}$, respectively.
These identities remain valid if we modify $\Phi$ so that $E_{21}=E_{31}=E_{32}=0$, that is
\begin{equation}\label{eq:23:b}
		\Phi \leftrightarrow \begin{bmatrix} I_m & 0 & 0 \\ 0 & E_{22} & 0 \\ 0 & 0 & 1 \end{bmatrix}.
\end{equation}
Choosing $S=E_{22}$, the identities (\ref{eq:22}) and (\ref{eq:23}) now yield the assertion.
\end{proof}

We now investigate properties of Lie algebra isomorphisms between two normalized Lie algebras. Any  isomorphism $ \Phi: \mathfrak{g}_{p,B} \mapsto \mathfrak{g}_{p,\tilde B}$ between two Lie algebras
of this type can be represented in matrix form as
\begin{equation}\label{eq:29c}
		 \Phi \leftrightarrow  \begin{bmatrix}
							a_{11} & a_{12} & a_{13} & a_{14}\\
							a_{21} & a_{22} & a_{23} & a_{24}  \\
							a_{31} & a_{32} & a_{33} & a_{34}\\
							a_{41} & a_{42} & a_{43} & a_{44}
							\end{bmatrix},
\end{equation}
by using the decomposition $\mathfrak{g}_{p,B} =V_{M_1} \oplus V_{M_2} \oplus V_W \oplus V_Z$. 
Our goal is to show that $a_{13}=a_{23}=a_{14}=a_{24}=0$, which guarantees that $\Phi$ maps $V_H$ onto $V_H$.
We begin with the following observation.

\begin{lem}\label{lem2}
Let $ \Phi: \mathfrak{g}_{p,B} \mapsto \mathfrak{g}_{p,\tilde B} $ 
be a Lie algebra isomorphism which  has the matrix representation
\begin{equation}\label{eq:30}
		 \Phi \leftrightarrow  \begin{bmatrix}
							a_{11} & 0 & 0  & 0\\
							0 & a_{22} & a_{23} & 0  \\
							a_{31} & a_{32} & a_{33} & 0\\
							a_{41} & a_{42} & a_{43} & a_{44}
							\end{bmatrix}.
\end{equation}
Then $a_{23}=0$.
\end{lem}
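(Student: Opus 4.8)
The plan is to read the action of $\Phi$ off the matrix (\ref{eq:30}) on the natural generators and then feed the one genuinely quadratic bracket, $[W_w,W_{\tilde w}]=Z_{\llbracket w,\tilde w\rrbracket}$, into the homomorphism property. Writing $a_{23}$ as a $1\times 2n$ row, $a_{33}$ as a $2n\times 2n$ block and $a_{44}$ as a scalar, the matrix (\ref{eq:30}) says
\[
	\Phi(W_w) = (a_{23}w)\,\tilde M_2 + W_{a_{33}w} + Z_{a_{43}w}, \qquad \Phi(Z_z) = Z_{a_{44}z},
\]
there being no $\tilde M_1$-component because the $(1,3)$ block vanishes. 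Note that only the restriction of $\Phi$ to $V_W\oplus V_Z$ is needed; $\Phi(M_1)$ and $\Phi(M_2)$ play no role. Note also that $a_{44}\neq 0$, since otherwise the last column of (\ref{eq:30}) would vanish and $\Phi$ would fail to be injective.

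Next I would apply $\Phi$ to $[W_w,W_{\tilde w}]=Z_{\llbracket w,\tilde w\rrbracket}$ and split the resulting identity $[\Phi(W_w),\Phi(W_{\tilde w})]=Z_{a_{44}\llbracket w,\tilde w\rrbracket}$ into its $V_W$- and $V_Z$-components, using (\ref{eq:9}) and (\ref{eq:9a}). The $V_W$-part is fed only by the two brackets $[\tilde M_2,W_\bullet]$, and the $V_Z$-part only by $[W_\bullet,W_\bullet]$, the terms $[\tilde M_2,Z_\bullet]$ dropping out thanks to the normalization $p_2=0$. The $V_Z$-component gives $\llbracket a_{33}w,a_{33}\tilde w\rrbracket=a_{44}\llbracket w,\tilde w\rrbracket$, i.e. $a_{33}^{\transp}\mathcal J a_{33}=a_{44}\mathcal J$; since $a_{44}\neq 0$ this forces $\det a_{33}\neq 0$, so $a_{33}$ is invertible. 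The $V_W$-component gives the bilinear identity
\[
	(a_{23}w)\,\tilde C_2 a_{33}\tilde w = (a_{23}\tilde w)\,\tilde C_2 a_{33} w \qquad (w,\tilde w\in\R^{2n}).
\]

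Now suppose, for contradiction, that $a_{23}\neq 0$, and choose $\tilde w_0$ with $a_{23}\tilde w_0\neq 0$. The displayed identity then expresses $\tilde C_2 a_{33}w$ as a scalar multiple of the fixed vector $\tilde C_2 a_{33}\tilde w_0$ for every $w$, so the range of $\tilde C_2 a_{33}$ is at most one-dimensional; as $a_{33}$ is invertible, $\operatorname{rank}\tilde C_2\le 1$. But by (\ref{eq:10}) and the normalization $p_2=0$ we have $\tilde C_2=\operatorname{diag}(\tilde B_2,-\tilde B_2^{\transp})$, whose rank equals $2\operatorname{rank}\tilde B_2$ and is therefore even; hence $\tilde B_2=0$, and again because $p_2=0$ this gives $\tilde M_2=0$. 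This contradicts the standing hypothesis (M1) that $\tilde M_1,\tilde M_2$ are linearly independent. Therefore $a_{23}=0$.

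The bracket bookkeeping is entirely routine, so the argument really rests on the last paragraph. I would single this out as the main obstacle: recognizing that the correct relation to exploit is the quadratic bracket $[W_w,W_{\tilde w}]$ rather than the linear brackets $[M_k,W_w]$, which only yield $a_{23}C_k=0$ and are too weak. What then closes the argument is playing the parity of $\operatorname{rank}\tilde C_2$ — forced by the block shape $\operatorname{diag}(\tilde B_2,-\tilde B_2^{\transp})$ — against the rank-one conclusion, with the invertibility of $a_{33}$ supplying the passage from $\tilde C_2 a_{33}$ to $\tilde C_2$, and with $p_2=0$ together with (M1) turning $\tilde B_2=0$ into an impossibility.
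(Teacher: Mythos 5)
Your argument is correct, but it reaches $a_{23}=0$ by a genuinely different route than the paper. The paper argues structurally: since $\Phi$ preserves $V_Z$, it induces an isomorphism of the quotients by $V_Z$, which by the shape of (\ref{eq:30}) carries $\mathfrak{k}=V_{M_2}\oplus V_W$ onto $\tilde{\mathfrak{k}}=V_{\tilde M_2}\oplus V_W$; the paper then shows that $V_W$ is the \emph{unique} abelian ideal of codimension one in $\tilde{\mathfrak{k}}$ (any candidate ideal containing an element with nonzero $\tilde M_2$-component fails to absorb suitable $X_{x_o}$ and $Y_{y_o}$, using only $\tilde B_2\neq 0$), so the image of $V_W$ must be $V_W$, i.e.\ $a_{23}=0$. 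You instead compute directly with the homomorphism property on $[W_w,W_{\tilde w}]=Z_{\llbracket w,\tilde w\rrbracket}$: the $V_W$-component yields your bilinear identity, which for $a_{23}\neq 0$ forces $\operatorname{rank}(\tilde C_2 a_{33})\le 1$; the $V_Z$-component supplies $a_{33}^{\transp}\mathcal J a_{33}=a_{44}\mathcal J$ and hence the invertibility of $a_{33}$; and the even rank of $\tilde C_2=\operatorname{diag}(\tilde B_2,-\tilde B_2^{\transp})$ from (\ref{eq:10}) closes the contradiction. Both proofs ultimately rest on the same two inputs, the normalization $p_2=0$ and $\tilde M_2\neq 0$ from (M1). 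The paper's version is a little more economical in that it never needs the symplectic identity or the invertibility of $a_{33}$, and its uniqueness claim is reused almost verbatim in Case 5 of Theorem \ref{thm:5}; your version makes the mechanism completely explicit, and your closing remark is well taken: the linear brackets $[M_k,W_w]$ alone only give $a_{23}C_k=0$, which is insufficient precisely in the cases (both $C_k$ singular, e.g.\ both $B_k$ nilpotent) where the lemma is actually needed.
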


\begin{proof}
Since $\Phi$ maps the ideal $V_Z$ onto $V_Z$, it factors to a Lie algebra isomorphism 
$\hat \Phi : \mathfrak{h} = \mathfrak{g}_{p,B}/V_Z \simeq V_{M_1} \oplus V_{M_2} \oplus V_W
		\mapsto \tilde{\mathfrak{h}}= \mathfrak{g}_{p, \tilde B}/V_Z \simeq V_{\tilde M_1} \oplus V_{\tilde M_2} \oplus V_W$ 
		whose matrix representation is 
\begin{equation*}
		 \hat \Phi \leftrightarrow  \begin{bmatrix}
							a_{11} & 0 & 0 \\
							0 & a_{22} & a_{23}  \\
							a_{31} & a_{32} & a_{33}
							\end{bmatrix}.
\end{equation*}
Let us set $\mathfrak{k}=V_{M_2} \oplus V_W$. Then $\hat  \Phi$ maps $\mathfrak{k}$ onto $\tilde{\mathfrak{k}}$, 
and $V_W$ is an abelian ideal of codimension one in  $\mathfrak{k}$, respectively  $\tilde{\mathfrak{k}}$.

We claim that $V_W$ is the unique such ideal. 
For suppose that $J$ is another abelian ideal of codimension one in  $\mathfrak{k}$. Let $U_1,\dots,U_{2n}$ be a basis of $J$. Then each
$U_i$ is of the form 
\[
				U_i = \alpha_i M_2 + W_{w_i}, \quad \alpha_i \in \R, \ i=1,\dots,2n.
\]
If $\alpha_i=0$ for all $i$, the claim is proved, otherwise we may assume without loss of generality, that $\alpha_1=1$
and $\alpha_i=0$ for all $i \ge 2$. Now since $B_2 \neq 0$, there exist $x_o, y_o \in \R^n$ so that
\begin{align*}
		[ U_1, X_{x_o} ] &= [ M_2 , X_{x_o}] = X_{B_2 x_o} \neq 0\\
		[ U_1, Y_{y_o} ] &= [ M_2 , Y_{y_o}] = Y_{-B_2^{\transp} y_o} \neq 0.
\end{align*}
Since $J$ is abelian, it follows that $X_{x_o},Y_{y_o} \notin J$, contradicting the assumption that $\text{codim}(J)=1$. This proves the claim.

From the claim it follows immediately that $\hat \Phi$ maps $V_W$ onto  $V_W$, and hence that $a_{23}=0$.
\end{proof}

\begin{thm}\label{thm:5}
Let $ \Phi: \mathfrak{g}_{p,B} \mapsto \mathfrak{g}_{p,\tilde B}$ be a Lie algebra isomorphism of normalized Lie algebras. Then $\Phi$ maps $V_{H}$ onto $V_{H}$.
\end{thm}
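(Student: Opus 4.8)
The goal is to show that the top-right $2\times 2$ block of the matrix \eqref{eq:29c} vanishes, i.e. that $a_{13}=a_{23}=a_{14}=a_{24}=0$; equivalently, that $\Phi(V_H)\subseteq V_H$, since then injectivity of $\Phi$ together with $\dim V_H=2n+1$ in both algebras forces $\Phi(V_H)=V_H$. The plan is to first locate the image of the one-dimensional piece $V_Z$, then pass to the quotient by $V_Z$ and show there that the $V_W$-part is preserved. Two computations get the argument started. Applying $\Phi$ to $[W_w,W_{\tilde w}]=Z_{\llbracket w,\tilde w\rrbracket}$ and to $[M_k,W_w]=W_{C_kw}$ and comparing $V_{\tilde M}$-components is clean: in $\mathfrak{g}_{p,\tilde B}$ no bracket of the listed generators ever produces a $V_{\tilde M}$-component, so the $V_{\tilde M}$-parts of the right-hand sides must vanish. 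This immediately gives $a_{14}=a_{24}=0$ (so $\Phi(V_Z)\subseteq V_H$) and, writing $\ell=(a_{13},a_{23})$ for the $V_{\tilde M}$-component of $\Phi|_{V_W}$, the relations $\ell\,C_k=0$ for $k=1,2$; that is, $\ell$ annihilates $V_W^0:=\Ima C_1+\Ima C_2$, which is precisely the $V_W$-part of the derived algebra.

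Next I would upgrade $\Phi(V_Z)\subseteq V_H$ to $\Phi(V_Z)=V_Z$ by identifying $V_Z$ intrinsically as the center of the nilradical. The decisive observation is that since $B_1,B_2$ commute, so do $C_1,C_2$; hence $N:=\{\alpha\in\R^2:\alpha_1C_1+\alpha_2C_2\text{ is nilpotent}\}$ is a linear subspace, because a sum of commuting nilpotent matrices is nilpotent. Consequently $\mathfrak{n}:=V_H\oplus\{\alpha_1M_1+\alpha_2M_2:\alpha\in N\}$ is a nilpotent ideal, and one checks it is the nilradical of $\mathfrak{g}_{p,B}$. Using nondegeneracy of the symplectic form together with linear independence (M1) (which gives $\alpha_1C_1+\alpha_2C_2\neq0$ for $\alpha\neq0$), its center computes to exactly $V_Z$. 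Since the nilradical and its center are preserved by any isomorphism, $\Phi(V_Z)=V_Z$; in block terms $a_{14}=a_{24}=a_{34}=0$ and $a_{44}\neq0$. I emphasize that this step works uniformly in both normalizations $p_1\in\{0,1\}$, even though $V_Z$ is the full center of $\mathfrak{g}_{p,B}$ only when $p_1=0$.

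With $\Phi(V_Z)=V_Z$ the map descends to an isomorphism $\bar\Phi:\mathfrak{g}_{p,B}/V_Z\to\mathfrak{g}_{p,\tilde B}/V_Z$ of the solvable algebras $V_M\ltimes V_W$, in which $V_W$ becomes abelian and the only nonzero brackets are $[M_k,W_w]=W_{C_kw}$. The remaining goal is exactly $\bar\Phi(V_W)=V_W$, i.e. $\ell=0$. In the favorable case $V_W^0=V_W$ this is immediate, since then $V_W=[\,\overline{\mathfrak{g}}_{p,B},\overline{\mathfrak{g}}_{p,B}]$ is characteristic and $\ell$ already annihilates it. The content of the theorem is therefore the case $V_W^0\subsetneq V_W$, where bracket relations alone cannot pin down $\ell$ on a complement of $V_W^0$, because no $W_w$ with $w\notin V_W^0$ lies in the image of a bracket.

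This last case is the main obstacle, and I would resolve it by reducing to Lemma~\ref{lem2}. Changing basis in the two-dimensional spaces $V_M$ and $V_{\tilde M}$ of domain and codomain — tracking, as in the proof of Theorem~\ref{thm:2}, that this can be arranged compatibly with the normalization $p_1\in\{0,1\}$, $p_2=0$ — I would bring $\bar\Phi$ to a shape respecting the codimension-one subalgebra $\mathfrak{k}=V_{M_2}\oplus V_W$, namely the form \eqref{eq:30}. Lemma~\ref{lem2} then applies: because $B_2\neq0$ (forced by $M_2\neq0$ and $p_2=0$), $V_W$ is the unique abelian ideal of codimension one in $\mathfrak{k}$, which yields $a_{23}=0$; the symmetric argument with $V_{M_1}\oplus V_W$, legitimate since $C_1\neq0$ always holds under (M1), gives $a_{13}=0$. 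Hence $\ell=0$, so $\bar\Phi(V_W)=V_W$, and combined with $\Phi(V_Z)=V_Z$ we conclude $\Phi(V_H)=V_H$. The delicate point throughout is the bookkeeping of the $V_M$-basis changes needed to realize the reduced form without disturbing the value of $p$ fixed by the normalization, together with showing that these changes can simultaneously expose both $M$-directions for the two applications of the uniqueness principle.
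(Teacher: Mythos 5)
Your opening moves are sound, and in one respect cleaner than the paper's: comparing $V_{\tilde M}$-components of images of brackets does give $a_{14}=a_{24}=0$ and $\ell\,C_k=0$, and your uniform identification of $V_Z$ as the center of the nilradical $V_H\oplus\{\alpha_1M_1+\alpha_2M_2 : \alpha_1C_1+\alpha_2C_2 \text{ nilpotent}\}$ is correct and replaces the paper's case-by-case location of $V_Z$. The gap is in the final step, which is where all the real difficulty of the theorem sits. To invoke Lemma~\ref{lem2} you must first put $\Phi$ into the form (\ref{eq:30}), and that form already has $a_{13}=0$ built into it. A change of basis in $V_M$ acts only on columns $1,2$ of (\ref{eq:29c}) and does not touch $(a_{13},a_{23})$ at all; a change of basis in $V_{\tilde M}$ acts on rows $1,2$ and replaces the pair $(a_{13},a_{23})$ by invertible linear combinations of its two entries, which can produce $a_{13}=0$ only if $a_{13}$ and $a_{23}$ are linearly dependent as functionals on $V_W$. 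Equivalently, Lemma~\ref{lem2} is applicable only if the projection of $\hat\Phi(V_W)$ to $V_{\tilde M}$ is at most one-dimensional, so that some line $V_{\tilde M_2}$ satisfies $\hat\Phi(V_W)\subseteq V_{\tilde M_2}\oplus V_W$. Nothing you have established excludes the rank-two case, so the ``reduction to Lemma~\ref{lem2}'' assumes a substantial part of what is to be proved. (Two smaller points: once you have reached (\ref{eq:30}) you have already imposed $a_{13}=0$, so the ``symmetric argument'' would add nothing; and when $p_1=1$ and $B_1=0$ the uniqueness argument of Lemma~\ref{lem2} applied to $V_{M_1}\oplus V_W$ breaks down, since $[M_1,X_x]=0$ for all $x$.)

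In the cases where exactly one direction in $V_M$ acts nilpotently, the reduction you want is legitimate, but for a reason you do not use: the nilradical is then $V_{M_2}\oplus V_H$ (after renaming), a characteristic subspace whose preservation forces the entire first row of (\ref{eq:29c}) to be $(a_{11},0,0,0)$ — this is exactly how the paper reaches (\ref{eq:38}) and then (\ref{eq:30}) in its Cases 2 and 4. You have this nilradical in hand but exploit only its center. The genuinely hard case — $p=0$ with both $B_1,B_2$ nilpotent, where the whole algebra is nilpotent and the nilradical distinguishes nothing — is untouched by your argument. There the paper shows directly that $\hat\Phi(V_W)$, an abelian ideal of codimension two in $\mathfrak{g}_{p,\tilde B}/V_Z$, must equal $V_W$, by a case analysis on the dimension of its projection to $V_{\tilde M}$ and, in the last subcase, on the coranks of the matrices $\alpha\tilde B_1+\beta\tilde B_2$, ending in a contradiction with their linear independence. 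Some replacement for that analysis is needed before your proof closes.
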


\begin{proof}
We consider five distinct possibilities:
 $p_1=1$ and $B_2$ is nilpotent, $p_1=1$ and $B_2$ is not nilpotent, $p_1=0$ and
 none of $B_1$ and $B_2$ is nilpotent,
 $p_1=0$ and exactly one of $B_1$ and $B_2$ is nilpotent, 
and  $p_1=0$ and both, $B_1$ and $B_2$ are nilpotent.

As will be seen below, in each of the five cases, $\mathfrak{g}_{p,B}$ will have a different
algebraic structure. Thus, two Lie algebras which are isomorphic via some isomorphism $\Phi$
must belong to the same of the five cases.

\setitemize[1]{leftmargin=16pt}
\setitemize[2]{leftmargin=12pt}
\setitemize[3]{leftmargin=12pt}

\begin{itemize}
\item {\it Case 1: $p_1=1$ and $B_2$ is not nilpotent}  

	Here,  $\mathfrak{g}_{p,B}$ has nilradical $V_H$ of dimension $2n+1$. Since $\Phi$ maps nilradical to nilradical, it follows that 	 		 
	$\mathfrak{g}_{p,\tilde B}$  has nilradical of dimension $2n+1$ as well, which thus must coincide with $V_H$. 
	That is, $\Phi$ maps $V_H$ 	onto $V_H$.

\item {\it Case 2: $p_1=1$ and $B_2$ is nilpotent}

Here,  $\mathfrak{g}_{p,B}$ has nilradical $V_{M_2} \oplus V_H$ of dimension $2n+2$. Since $p_1=1$, and the nilradical of $\mathfrak{g}_{p,\tilde B}$ has dimension $2n+2$ as well, it follows that $\tilde B_2$ is nilpotent and the nilradical of  $\mathfrak{g}_{p,\tilde B}$ is  $V_{\tilde M_2} \oplus V_H$. In addition, as $\Phi$ maps the center $V_Z$ of the nilradical onto the center $V_Z$ of the nilradical, it follows that $\Phi$ has matrix form
\begin{equation}\label{eq:38}  
\Phi \leftrightarrow
		\begin{bmatrix}  a_{11} &  0  & 0 & 0 \\
						 a_{21} & a_{22} & a_{23} & 0\\
						 a_{31} & a_{32} & a_{33} & 0\\
						 a_{41} & a_{42} & a_{43} & a_{44}
		\end{bmatrix}.
\end{equation}
	Replacing  $\tilde M_1$ with a suitable $\tilde M_1 + \beta \tilde M_2$ we may assume that $a_{21}=0$. Applying Lemma \ref{lem2} it follows that $a_{23}=0$, that is, $\Phi$ maps $V_H$ onto $V_H$.

\item {\it Case 3: $p_1=0$ and none of $B_1,B_2$ is nilpotent} 

Simply apply the argument of case~1.

\item {\it Case 4: $p_1=0$ and one of $B_1,B_2$ is nilpotent}

Without loss of generality, we may assume that $B_2$ is nilpotent, but $B_1$ is not. Then $\mathfrak{g}_{p,B}$ has nilradical $V_{M_2} \oplus V_H$ of dimension $2n+2$. Since $p_1=0$ and the nilradical of $\mathfrak{g}_{p,\tilde B}$ has dimensions $2n+2$, the latter algebra  must again belong to case 4, so that replacing $\tilde B_1$ and $\tilde B_2$ by suitable linear combinations,$\mathfrak{g}_{p,\tilde B}$ will have nilradical $V_{\tilde M_2} \oplus V_H$.  The remainder of the argument follows that of case 2.

\item {\it Case 5: $p_1=0$ and both, $B_1$ and $B_2$ are nilpotent}

Here, $\mathfrak{g}_{p,B}$ is itself nilpotent with center $V_Z$. Hence  $\mathfrak{g}_{p,\tilde B}$ is also nilpotent and again 
belongs to case 5. Since $\Phi$ maps center to center, 
it has the form (\ref{eq:29c}) with $a_{14}=a_{24}=a_{34}=0$. 
We begin by considering the induced isomorphism $\hat \Phi : \mathfrak{h} \mapsto \tilde{\mathfrak{h}}$,
\[
	\hat \Phi \leftrightarrow 	\begin{bmatrix}  
								a_{11} &  a_{12} & a_{13} \\
								a_{21} & a_{22} & a_{23} \\
								a_{31} & a_{32} & a_{33} 
				\end{bmatrix}.
\] 
Since $  V_W$ is an ideal of codimension two in $\mathfrak{h}$,
then $\tilde I:=\hat \Phi(V_W)$ will be an abelian ideal of codimension two in $\tilde{\mathfrak{h}}$, that is, of dimension $2n$. 

We claim that $\tilde I=V_W$. Suppose to the contrary that $\tilde I \not =V_W$. 
Denoting by $P_o$ the projection of $\mathfrak{h}$ onto $V_{\tilde M}=V_{\tilde M_1} \oplus V_{\tilde M_2}$ and setting $V_o=P_o(\tilde I)$,
we then obtain that $\dim(V_o) \in \{1,2 \}$.

\begin{itemize}

\item [$\circ$] {\it Subcase 5a: $\dim(V_o)=1$.} 
		Then elements of $\tilde I$ are of the form
\[	
		A = \alpha \tilde M_o + H ,\qquad \alpha \in \R, \ H \in V_W	
\]
for some fixed nonzero $\tilde M_o=\text{diag}(0,\tilde B_o,0) \in V_{\tilde M}$. 
Fix one such $A$ with $\alpha =1$. Then there exist $x_o,y_o \in \R^n$ so that
\begin{align*}	
	[ A, X_{x_o} ] &=[ \tilde M_o, X_{x_o} ]= X_{\tilde B_o x_o} \neq 0	\qquad \text{and} \\
	[ A, Y_{y_o} ] &= [ \tilde M_o, Y_{y_o} ] = Y_{-\tilde B_o^{\transp} y_o} \neq 0.	
\end{align*}
Since $\tilde I$ has codimension two, it follows that $\mathfrak{h}=  \tilde I \oplus <X_{x_o},Y_{y_o}>$ where  $<X_{x_o},Y_{y_o}>$ denotes $\text{span}(X_{x_o},Y_{y_o})$. In fact, suppose $\alpha X_{x_o} +\beta Y_{y_o} \in \tilde I$ for some scalars $\alpha,\beta$ . Then
\[ 
		0 = \left [ A,  \alpha X_{x_o} + \beta Y_{y_o} \right ]
 			 =    \alpha [ A,  X_{x_o}  ] + \beta   [ A,  Y_{y_o}   ]
  			=   \alpha  X_{\tilde B_o x_o} +  \beta Y_{-\tilde B_o^{\transp} y_o} ,
\]
which implies that $\alpha=\beta=0$. Now as $<X_{x_o},Y_{y_o}> \subseteq V_W$ we have
\[		
		V_o = P_o( \tilde I) = P_o( \tilde I \oplus <X_{x_o},Y_{y_o}> ) = P_o( \mathfrak{h}) = V_{\tilde M}	
\]
contradicting the fact that $V_o$ has dimension one.

\item [$\circ$] {\it Subcase 5b: $\dim(V_o)=2$.} 

Then  $V_o= V_{\tilde M}$. Note that by nilpotency of $\tilde B_1$ and $\tilde B_2$,
		all linear combinations $\alpha \tilde B_1 + \beta \tilde B_2$ have nontrivial null space.
							   
\begin{itemize}
\item [$\diamond$] {\it Subcase 5b-1: there exists $\tilde B_o=\alpha \tilde B_1 + \beta \tilde B_2$ whose null space has dimension $\leq n-2$.}
				
Set $\tilde M_o=\alpha \tilde M_1 + \beta \tilde M_2$ and pick any $A \in \tilde I$ with $P_o(A)=\tilde M_o$.
By choice of $\tilde B_o$, there exist two elements $ x_1,x_2 \in \R^n$ 
with $[ \tilde M_o, X_{x_1} ] = X_{\tilde B_o x_1}$ and  $[ \tilde M_o, X_{x_2} ] = X_{\tilde B_o x_2}$ linearly independent. Also, pick  $y_1 \in \R^n$ with $[\tilde M_o, Y_{y_1}]= Y_{-\tilde B_o^{\transp} y_1} \neq 0$.
We observe that  $\tilde I + <X_{x_1},X_{x_2},Y_{y_1}>$ is a $2n+3$ dimensional subspace of $\mathfrak{h}$. 
In fact, suppose $   \alpha X_{x_1} +\beta X_{x_2} + \gamma Y_{y_1} \in \tilde I$ for some scalars $\alpha,\beta, \gamma $. Then
\begin{align*}
0& = \left [ \tilde M_o,  \alpha X_{x_1} +\beta X_{x_2} + \gamma Y_{y_1} \right ]\\
&=     \alpha [ \tilde M_o,  X_{x_1}  ] + \beta   [ \tilde M_o, X_{x_2} ] + \gamma [ \tilde M_o,  Y_{y_1}   ] \\
&= \alpha X_{\tilde B_o x_1} +  \beta X_{\tilde B_o x_2} + \gamma Y_{-\tilde B_o^{\transp} y_1} 
\end{align*}
from which it follows that $\alpha=\beta=\gamma=0$. This, however, contradicts the fact that $\mathfrak{h}$ has dimension $2n+2$.
				
\item [$\diamond$] {\it Subcase 5b-2: the null spaces of all nonzero $ \alpha \tilde B_1 + \beta \tilde B_2$ have dimensions $n-1$.}

Pick elements $A_1 = \tilde M_1 + H_1$ and $A_2 = \tilde M_2 + H_2$ ($H_1,H_2 \in V_W$) of $\tilde I$. Since 	
\begin{equation}\label{eq:44}
  \text{ad}( A_i)(X_x)=X_{\tilde B_i x} \qquad \text{and} \qquad \text{ad}(A_i)(Y_y)=Y_{-\tilde B_i^{\transp} y}, \qquad i=1,2,
\end{equation}
it follows 	 that $\ker(\text{ad}(A_1))$ and $\ker(\text{ad}(A_2))$ both have codimensions of at least 2 in $\mathfrak{h}$.
In addition, since $\tilde I$ is abelian, then $\tilde I \subseteq \ker(\text{ad}(A_1)) \cap  \ker(\text{ad}(A_2))$. Comparing dimensions, if follows that 	$ \tilde I = \ker(\text{ad}(A_1)) =  \ker(\text{ad}(A_2)) $.
Now (\ref{eq:44}) shows that $\ker(\text{ad}(A_i)_{|V_W})$ splits into subspaces $V_{X_o}$ and $V_{Y_o}$
	of $V_X$, respectively  $V_Y$, of codimensions one. Hence we can decompose $V_X$ and $V_Y$ as direct sums
	\begin{equation}\label{eq:45}
			V_X = V_{X_o } \oplus < X_{x_o} > , \qquad V_Y = V_{Y_o}  \oplus <Y_{y_o}>
	\end{equation}
	of subspaces. Here we have chosen the vectors $x_o$ and $y_o$ so that $x_o \perp X_o$ and $y_o \perp Y_o$ in $\R^n$
	with respect to the usual inner product. 
	Now since 	$X_o = \ker(\tilde B_i)$ and also $Y_o = \ker(\tilde B_i^{\transp})=\text{range}(\tilde B_i)^{\perp}$ ($i=1,2$), 
	it follows that, after  expressing the 	common domain space as $X_o \oplus <x_o>$ and the common range space 
	as $Y_o \oplus <y_o>$, the matrices $\tilde B_i$ 
	take the form
	\[		
				\tilde B_i = \begin{bmatrix} 0 & 0 \\ 0 & b_i \end{bmatrix}
	\]
	for scalars $b_1$ and $b_2$, contradicting the linear independence of the two matrices.
\end{itemize}
 \end{itemize}
Thus, the claim is proved. 
It follows immediately that $a_{13}=a_{23}=0$. 
Since $\Phi$ maps center $V_Z$ onto center $V_Z$, then also $a_{14}=a_{24}=a_{34}=0$.
 That is, $\Phi$ maps $V_H$ onto $V_H$.
 \end{itemize}
 This completes the proof.
\end{proof}

Combining  Theorems \ref{thm:1}, \ref{thm:2}, \ref{thm:5} and Remark \ref{rem:1}, we arrive at:

\begin{cor}
Two normalized Lie algebra $\mathfrak{g}_{p,B}$ and $\mathfrak{g}_{\tilde{p},\tilde{B}}$ are isomorphic if and only if 
\begin{enumerate}
	\item $p=\tilde p$, and
	\item  there exists $S \in Sp(n,\R)$ so that, after replacing the matrices  $\tilde M_1, \tilde M_2$
			with a suitable basis of  $V_{\tilde M}$,
				\begin{equation*}
					\tilde C_k 	= S C_k S^{-1}, \qquad k=1, 2,
				\end{equation*}
				with $C_k$ and $\tilde C_k$ given as in (\ref{eq:10}).
\end{enumerate}
\end{cor}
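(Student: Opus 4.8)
The plan is to prove the two implications of the ``if and only if'' separately, drawing each direction from the theorems already established and supplying only the bookkeeping needed to match their hypotheses to the present statement. Essentially no new argument is required: Theorem~\ref{thm:1} furnishes sufficiency, while Remark~\ref{rem:1}, Theorem~\ref{thm:5}, and Theorem~\ref{thm:2} together furnish necessity.

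For the \emph{sufficiency} direction, I would assume $p=\tilde p$ and condition (2), so that some symplectic $S$ realizes $\tilde C_k = S C_k S^{-1}$ after replacing $\tilde M_1,\tilde M_2$ by a suitable basis of the two-dimensional subalgebra $V_{\tilde M}$. First I would invoke Theorem~\ref{thm:1}(3): passing to another basis of $V_{\tilde M}$ is merely a change of basis and yields the very same, hence isomorphic, Lie algebra. We may therefore assume the basis has already been chosen so that $\tilde C_k = S C_k S^{-1}$ with $S\in Sp(n,\R)$, and since $\tilde p = p$, Theorem~\ref{thm:1}(1) produces the isomorphism $\mathfrak{g}_{p,B}\simeq\mathfrak{g}_{\tilde p,\tilde B}$ directly. (One delicate point worth recording is that the equality $\tilde p = p$ is in fact forced by (2) alone, since $\operatorname{tr}(C_k)=n\,p_k$ by the block form \eqref{eq:10} and similarity preserves the trace; this keeps the two conditions consistent after the basis change.)

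For the \emph{necessity} direction, I would let $\Phi:\mathfrak{g}_{p,B}\to\mathfrak{g}_{\tilde p,\tilde B}$ be an isomorphism of normalized Lie algebras. Condition (1) is exactly Remark~\ref{rem:1}: as $\Phi$ carries center to center, and these algebras have center $V_Z$ precisely when $p_1=0$ and trivial center when $p_1=1$, we obtain $p_1=\tilde p_1$; normalization forces $p_2=\tilde p_2=0$, so this upgrades to $p=\tilde p$. To obtain condition (2), I would then apply Theorem~\ref{thm:5}, which guarantees that any such isomorphism maps $V_H$ onto $V_H$. With this property in hand, $\Phi$ satisfies precisely the hypothesis of Theorem~\ref{thm:2}, which delivers an $S\in Sp(n,\R)$ such that, after replacing $\tilde M_1,\tilde M_2$ by suitable linear combinations, $\tilde C_k = S C_k S^{-1}$ for $k=1,2$. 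This is exactly condition (2).

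Since all of the substantive work is carried out in the cited results, no genuinely new difficulty arises; the care required is organizational. The two delicate points are (i) identifying the phrase ``suitable basis of $V_{\tilde M}$'' in the corollary with ``suitable linear combinations of $\tilde M_1,\tilde M_2$'' in Theorem~\ref{thm:2}, and (ii) tracking the normalization so that the conclusion $p_1=\tilde p_1$ of Remark~\ref{rem:1} is promoted to the full equality $p=\tilde p$. The main conceptual hurdle---that an arbitrary isomorphism must respect the Heisenberg part $V_H$---has already been dispatched in Theorem~\ref{thm:5}, so the remaining task is simply to chain the implications in the correct order.
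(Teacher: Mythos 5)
Your proposal is correct and follows exactly the paper's route: the paper itself states the corollary as an immediate combination of Theorem~\ref{thm:1} (sufficiency) with Remark~\ref{rem:1}, Theorem~\ref{thm:5}, and Theorem~\ref{thm:2} (necessity), which is precisely the chain you describe. The additional trace observation $\operatorname{tr}(C_k)=n\,p_k$ is a harmless bonus not present in the paper.
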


Table \ref{tab:1} lists the equivalence classes of all Lie algebras $\mathfrak{g}_{p,B}$
in the lowest dimensions, namely for $n=1,2$. We note that the non-nilpotent cases can
also be obtained from the list in \cite{Rubin}.

\begin{center}
\begin{table}[ht]
\caption{Equivalence classes of $\mathfrak{g}_{p,B}$ for $n=1,2$.}
\label{tab:1}
\[
\begin{array}{l|ccll}
\hline
	& B_1 & B_2 & \text{Range of parameters} & \text{Remarks} \\
\hline
\hline
n=1	&&&\\
	\quad p=0 &   \text{---} &  \text{---} & & \text{none exists}\\[8pt]
	\quad p=1 & \begin{bmatrix}	   \frac{1}{2}  	\end{bmatrix}	
	   					&    \begin{bmatrix} 1 		\end{bmatrix}		 	
				 		& 	\\[8pt]
\hline
n=2	&&&\\
	\quad p=0 &  \begin{bmatrix}	  1 & 0 \\ 0 & 0  		\end{bmatrix}	
	   					& \begin{bmatrix} 	  0 & 0 \\  0 & 1	\end{bmatrix}		 	
				 		&  \\[16pt]
			&  \begin{bmatrix} 	  1 & 0 \\   0 & 1		\end{bmatrix}	
	   					& \begin{bmatrix} 	  0 & 1 \\  0 & 0	\end{bmatrix}		 	
				 		&  & \text{$B_2$ is nilpotent} \\[16pt]
			&  \begin{bmatrix} 	  1 & 0 \\   0 & 1		\end{bmatrix}	
	   					& \begin{bmatrix} 	  0 & 1 \\  -1 & 0	\end{bmatrix}		 	
				 		&   \\[20pt]
	\quad p=1 & \begin{bmatrix} 	  \frac{1}{2} & 0 \\  0 & b		\end{bmatrix}	
	   					& \begin{bmatrix} 	  1 & 0 \\   0 & d		\end{bmatrix}		 	
				 		& \begin{array}{l} b >  \frac{1}{2}, \ 0 \leq |d| \leq 1 \\[3pt]
				 						b =  \frac{1}{2}, \ 0 \leq d \leq 1 
				 			\end{array} &    \\[16pt]
				 & \begin{bmatrix} 	  \frac{1}{2} & 1 \\   0  & \frac{1}{2}		\end{bmatrix}	
	   					& \begin{bmatrix}  1 & d \\  0 & 1		\end{bmatrix}		 	
				 		& d \ge 0&
				 			 	\\[16pt]			
				 & \begin{bmatrix}  \frac{1}{2} & 0 \\   0  & \frac{1}{2}		\end{bmatrix}	
	   					& \begin{bmatrix}  1 & 1 \\  0 & 1		\end{bmatrix}		 	
				 		&	\\[16pt]	 		
				 & \begin{bmatrix}   a & 0 \\   0  & a		\end{bmatrix}	
	   					& \begin{bmatrix}  0 & 1 \\   0 & 0		\end{bmatrix}		 	
				 		&a \ge \frac{1}{2}   & \text{$B_2$ is nilpotent}    \\[16pt]
				 & \begin{bmatrix}  a & 0 \\   0  & a		\end{bmatrix}	
	   					& \begin{bmatrix}  c & 1 \\  -1 & c		\end{bmatrix}		 	
				 		&a \ge \frac{1}{2} , \  c \ge 0  &  \\[16pt]
				 & \begin{bmatrix} \frac{1}{2} & b \\  -b  & \frac{1}{2}		\end{bmatrix}	
	   					& \begin{bmatrix}  1 & 0 \\   0 & 1		\end{bmatrix}		 	
				 		&b > 0  & 			  \text{case $b=0$ covered above}  
 				 \\[12pt]
\hline
\hline
\end{array}
\]
\end{table}
\end{center}

\section{Representations of the Groups $G_{p,B}$}

In this section we show that the groups $G_{p,B}$ can be represented as subgroups of both, the
symplectic group $Sp(n+1,\R)$, as well as the affine group $\text{\it Aff}(n+1)$.
Thus, they posses both, a metaplectic and a wavelet representation.
We also show that the metaplectic representation is equivalent to a sum of two copies of a subrepresentation of the wavelet representation.

\subsection{Preliminaries}

\textbf{Notation.} Throughout, symbols $ x ,y$  will denote vectors in  Euclidean space  $\R^n$ written as column vectors, 
while symbols $  \myxi,\eta $  will denote elements in the Euclidean space written as row vectors. For ease of distinction,
we  denote the space of row vectors by $\rnhat$.
The transpose of a vector or matrix $x$ is denoted by $\trp{x}$, hence the inner product in $\R^n$ is
$x \cdot y  = \trp{y}  x  $. 
The \emph{Fourier transform} is given by
\[	
		\hat f ( \myxi ) = \int_{\R^n} f( x ) e^{-2i \pi  \myxi   x } \, d x 	
\]
for $f \in L^1(\R^n)$ and $ \myxi  \in \rnhat$. The restriction of the map $f \mapsto \hat f$ to $(L^1 \cap L^2)(\R^n)$
extends to a unitary operator $\mathcal F: f \in L^2(\R^n) \mapsto \hat f \in L^2(\rnhat)$ which is also called the Fourier transform.

\emph{Translation} and \emph{modulation} define two unitary representations of $\R^n$  on $L^2(\R^n)$  by
\[	(T_xf)(y)=f(y-x) \qquad \text{and} \qquad    (E_{x} f)(y) = e^{2i\pi \trp{x} y } f(y),	\]
and the corresponding operators on $L^2(\rnhat)$ are defined similarly,
\[	
			(\hat T_{x}g)(\myxi)=g(\myxi- \trp{x} ) \qquad \text{and} \qquad    (\hat E_{x} g)(\myxi) = e^{2i\pi \myxi x } g(\myxi),	
\]
for $x,y \in \R^n$, $\myxi\in \rnhat$, $f \in L^2(\R^n)$ and $g \in L^2(\rnhat)$.
The natural representations of $GL_n(\R)$ on these spaces are given by left and right \emph{dilation}, respectively,
\[	(\mydil_a f)(y) = | \det a|^{-1/2} f(a^{-1}y) \qquad \text{and} \qquad  (\hat \mydil_a g)(\myxi) = | \det a|^{1/2} g(\myxi a ), 	\]
for $a \in GL_n(\R)$.
Observe that 
\begin{equation}\label{eq;conj}
			\hat E_{-x}=\mathcal{F}  T_x  \mathcal{F}^{-1}, \quad
			\hat T_{x} = \mathcal{F} E_{x} \mathcal{F}^{-1} \quad \text{and} \quad \hat \mydil_a = \mathcal{F} \mydil_a \mathcal{F}^{-1}.
\end{equation}

By isomorphism of groups we will always mean an isomorphism of topological groups.


\medskip\noindent
\textbf{The affine group and the wavelet representation.}
  The \emph{affine group} $\textit{Aff}(n,\R)$ is the group formed by the invertible linear transformations and translations in Euclidean
 space.  It takes the form of a semi-direct product $\R^n \rtimes_{\alpha} GL_n(\R)$, where the action $\alpha$ is simply
 matrix multiplication, $	\alpha_{a}(x) = ax$ for $x \in \R^n$, $a \in GL_n(\R)$. Thus the group
 operation is 
 \[	
 			(x,a) (\tilde x, \tilde a)=(x+a \tilde x, a \tilde a)	
 \]
for $(x,a),(\tilde x,\tilde a) \in \textit{Aff}(n,\R)$.  If $H$ is a closed subgroup of $GL_n(\R)$, then the corresponding
subgroup of the affine group can be represented as the matrix group
\[		
		 \R^n \rtimes_{\alpha} H \cong \left \{ \, \begin{bmatrix} a & x \\ 0 & 1 \end{bmatrix} \, : \,
		 				x \in \R^n, \ a \in H \, \right \} \subset GL_{n+1}(\R).
 \]
There is a natural unitary representation $\pi$ of such subgroups  on $L^2(\R^n)$, called the \emph{wavelet representation}
and determined by translations and left dilations,
\[		
			\qquad\qquad\pi(x,a) = T_x \mydil_a, \qquad (x,a) \in  \R^n \rtimes_{\alpha} H .
\]
Conjugating by the Fourier transform, (\ref{eq;conj}) yields an equivalent representation $\hat \pi$ on $L^2(\rnhat)$ given by
\begin{equation}\label{rep:wav2}
		\qquad \hat \pi(x,a) = \hat E_{-x} \hat \mydil_a, \qquad (x,a) \in  \R^n \rtimes_{\alpha} H .
\end{equation}


\medskip\noindent
\textbf{The symplectic group and the metaplectic representation.} 
The  symplectic group and its representations has been extensively studied in \cite{Folland} and \cite{Grochenig}.
The matrix $\mathcal{J}$ is one of its elements, and the additive group $\textit{Sym}(n,\R)$ of symmetric $n \times n$ matrices 
as well as $GL_n(\R)$ are naturally embedded in $Sp(n,\R)$ in form of the closed subgroups
\begin{equation}\label{eq:NL}
\begin{split}
  	N = \left\{ \mathcal{N}_m := \begin{bmatrix}
  									 I_n & 0  \\
   									 m & I_n  \\
 								\end{bmatrix} 
 				 : \, m \in \textit{Sym}(n,\R)  \right \}, \quad
      	L = \left\{ \mathcal{L}_a := \begin{bmatrix}
		   											a & 0  \\
			  									 	0 & \trp{ (a^{- 1}) }   \\
 											\end{bmatrix} 
 				 : \,  a \in GL_n(\R) \right\},
\end{split}
\end{equation}
and $Sp(n, \mathbb{R})$ is generated by $L \cup  N \cup \left\{ \mathcal{J} \right\}$.
There is a  projective representation $\mu$ of $Sp(n,\R)$ on $L^2(\R^n)$ called the
\emph{metaplectic representation} which, for the three types of generating matrices, is given by
\begin{align*}
 	 \mu  \left(\mathcal{L}_a \right)   =    \mydil_a, \qquad    		
	 \mu \left(\mathcal{N}_m  \right)       =   U_m, \qquad 		
	 \mu (-\mathcal{J}) = \left( { - i} \right)^{n/2} \mathcal{F}, 
\end{align*}
where  $U_m$ is a \emph{chirp},
\[
	\, (U_m f)( q ) = e^{i \pi \trp{q} m  q }f( q )  \, 
\]
for  $f \in L^2(\mathbb{R}^n)$ and $ q  \in \R^n$.

\medskip
\noindent
\textbf{Subgroups of the symplectic group which posses a wavelet representation.}
We next consider a class of subgroups of $Sp(n,\R)$ which arise as semidirect products
of a vector group with a group of dilations.
There is a natural linear action $\alpha$ of $GL_n(\R)$ on the vector space $\textit{Sym}(n,\R)$  given by
\begin{equation}\label{motiv:act}
	   \alpha_a(m) =  \trp{(a^{-1})} m a^{-1} \qquad\qquad  \big (a \in GL_n(\R), \ m \in Sym(n,\R)  \big ).	
\end{equation}
Let $E$ be a closed subgroup  $GL_n(\R)$ and $M$ an  $E$-invariant linear subspace  of $\textit{Sym}(n,\R)$.
As can be seen from (\ref{eq:NL}), $M$ and $E$ are isomorphic to closed subgroups of 
$Sp(n,\R)$, and the action $\alpha$ is implemented by conjugation under this isomorphism,
\[
	\mathcal L_a \mathcal N_m \mathcal L_a ^{-1} = \mathcal N_{ \trp{(a^{-1})} m a^{-1}}.
\]
Consequently, the semidirect product $M \rtimes_{\alpha} E$ is isomorphic to a closed subgroup of $Sp(n,\R)$,
\begin{equation}\label{eq:meta:mat}
		M \rtimes_{\alpha} E \cong K:= \left \{   \mathcal N_m \mathcal L_a = \begin{bmatrix}
   						a & 0  \\
  						 m a &  \trp{( a^{-1} )}
					\end{bmatrix}\ : \  m \in M, \ a \in  E \right   \}	.
\end{equation}
The restriction of the metaplectic representation to $K$, which we simply call the metaplectic representation
of $M \rtimes_{\alpha} E$, is  given by
\begin{equation}\label{eq3301}
		\qquad \mu(m,a) :=   \mu ( \mathcal N_m \mathcal L_a)  		 = U_m \mydil_a, \qquad\qquad ( \, (m,a) \in M \rtimes_{\alpha} E \, ),
\end{equation}
and it is a proper representation, that is, a group homomorphism.

\medskip
The groups $M \rtimes_{\alpha} E$ have a wavelet representation as well. In fact, identify $M$ with Euclidean space $\R^d$ by fixing a basis.
Since the action $\alpha$   is by invertible linear transformations, there exists a continuous homomorphism 
$\varphi : a \mapsto h_a$ of $E$ onto a (not necessarily closed)
subgroup $H$ of $GL_d(\R)$  satisfying
\[	\alpha_a(m) = h_a m \qquad (m \in \R^d, \ a \in E )	,\]
which naturally extends to a group homomorphism $\varphi$ of $M \rtimes_{\alpha} E$
onto the subgroup  $\R^d \rtimes_{\alpha} H$ of  $\textit{Aff}(d,\R)$ by
\begin{equation}\label{eq:phi}
			\	\varphi(m,a) = (m,h_a).	\ 
\end{equation}
(For ease of notation, we will denote these semi-direct products simply by $M \rtimes E$ and $\R^d \rtimes H$.)
Now composition of the homomorphism $\varphi$ with the wavelet representation (\ref{rep:wav2}) in Fourier space
 yields a wavelet representation of $M \rtimes E$ on $L^2(\widehat{\R^d})$, also denoted by $\hat\pi$, and given by
\begin{equation}\label{eq:metwav}
					\hat\pi( m,a) = \hat E_{-m} \hat \mydil_{h_a}.	
\end{equation}

\subsection{The groups $G_{p,B}$ are subgroups of $Sp(n+1,\R)$ and $A\!f\!f\!(n+1,\R)$}

We now show that the each group $G_{p,B}$ can be represented as a subgroup of the form $M \rtimes E $ of the symplectic group, and $\mathbb{R}^{n+1} \rtimes H$ of the affine group as discussed above.
We will impose the assumptions (M1) and (M2) of the section 2, which ensures that each group $G_{p,B}$ can be represented as a matrix group
of the form (\ref{eq:8}).

From now on, $M$ will denote the $n+1$ dimensional vector subspace of $\textit{Sym}(n+1,\mathbb R)$,
\begin{equation}\label{eq:101}
M  = \left\{ {m(z, x): = \left[ {\begin{array}{*{20}c}
  		 -z & { -x^T }  \\
   		{ -x} & 0  \\
 \end{array} } \right]\,:\, x \in \mathbb{R}^n , \ z \in \mathbb{R}} \right\}
\end{equation}
and $E=E_{p,B}$, the closed subgroup of $GL_{n+1}(\mathbb R)$,
 \begin{equation*}
	E_{p,B}  =  \left\{  a(t, y )
		: = \begin{bmatrix} 
					1  & 0  \\
			 		- \frac{1}{2}  y & I_n
			 \end{bmatrix} 
			  \begin{bmatrix} 
					e^{-pt/2}  & 0  \\
					 0 & e^{pt/2}  \left [e^{-Bt}\right ]^{\transp}
			 \end{bmatrix}
			\ : \ t \in \mathbb{R}^2,  \ y \in \R^n  \right\}.
\end{equation*}
The group law in $E_{p,B}$ is
\begin{equation}\label{sec5:lawd}
	a(t,y) a( \tilde t, \tilde y )=	a(t+\tilde  t,y + e^{pt/2}  \left [e^{-Bt}\right ]^{\transp} \tilde y) .	
\end{equation}
Now $M$ is invariant under the $E_{p,B}$-action (\ref{motiv:act}), in fact
\begin{equation}\label{eq:104}
		\alpha_{a(t,y)}(m(z,x))=(a(t,y)^{-1})^\transp m(z,x) a(t,y)^{-1} = m(e^{pt}z + y^\transp e^{Bt}x, e^{Bt}x).
\end{equation}
By (\ref{eq:meta:mat}), the semi-direct product $ M \rtimes E_{p,B}$ 
can be identified with a closed subgroup of $Sp(n+1,\R)$,
\[
	 M \rtimes  E_{p,B} \cong K_{p,B} 
	 			:= \left\{ k(t, x,  y,z)  
	 			= \begin{bmatrix}
   					a(t, y) & 0  \\
			   		 m(z, x)a(t, y) \ & \left( a(t, y) ^{-1} \right )^\transp   
				 \end{bmatrix}\ :\ z \in \mathbb{R},\, t \in \R^2, \,  x,  y \in \mathbb{R}^n  \right \}
\]
with the group law
\begin{equation*}
		k(t,  x, y, z) \, k(\tilde t,  \tilde x, \tilde y, \tilde z)
				= k(t + \tilde t, x + e^{Bt} \tilde x, y + e^{pt} \left [ e^{- B   t}\right ]^{\transp}\tilde y,z 
								+ e^{pt} \tilde z + y^T e^{Bt}\tilde x)
\end{equation*}
which is the same as the  group law (\ref{eq:4})  of $G_{p,B}$. It is now easy to see that the matrix groups
$G_{p,B}$ and $K_{p,B}$ are isomorphic.

Next we compute the homomorphism $\varphi :  M \rtimes E_{p,B} \to \mathbb{R}^{n+1} \rtimes H$ of (\ref{eq:phi}).
By (\ref{eq:101}), the vector space $M$  is naturally identified with $\R^{n+1}$
via the map $ m(z,x) \mapsto \left ( \begin{smallmatrix} z \\ x \end{smallmatrix} \right )$.
Equation (\ref{eq:104}) now shows that under this identification,
\[	
			h_{a(t,y)} \begin{pmatrix} z \\ x \end{pmatrix}  
			= \begin{pmatrix}  e^{pt} z + y^{\transp} e^{Bt} x \\[2pt] e^{Bt}x \end{pmatrix},
\]
so that
\[		
	H = H_{p,B} = \left \{ 	h_{a(t,y)}  = \begin{bmatrix} e^{pt} & y^{\transp} e^{Bt} \\ 0 & e^{Bt} \end{bmatrix}
		: \, t \in \R^2, \ y \in \R^n \, \right \}.
\]
We observe that by assumptions (M1)--(M2), this group is closed in $GL_{n+1}(\R)$, and the map $ \varphi : E_{p,B} \to H_{p,B}$
is an isomorphism of matrix groups. Hence,
\begin{align*}
	 G_{p,B} \cong M \rtimes E_{p,B} \cong
	\mathbb{R}^{n+1} \rtimes H_{p,B}   &= \left \{ (m,h_a) : m \in \R^{n+1}, \, h_a \in H_{p,B} \right \} \\[4pt]
				&\cong  \left \{ \, \begin{bmatrix}
   							h_{a(t,y)} & \left ( \begin{smallmatrix} z \\  x  \end{smallmatrix} \right ) \\[2pt]
  										 0 & 1  \\
 								\end{bmatrix} 	\ : \
 			z \in \R, \,  t \in \R^2, \, x,y \in \R^n \, \right\} .
\end{align*}
which is a closed subgroup of $\textit{Aff}(n+1,\R)$.


\subsection{The symplectic and wavelet representations of the groups $G_{p,B}$}


By (\ref{eq:metwav}), the wavelet representation of $G_{p,B} \cong \R^{n+1} \rtimes H_{p,B}$ in Fourier space is given by
\[			\hat \pi \bigl ( \, g( t, x, y, z )  \, \bigr )
		 = \hat E_{- \left (\begin{smallmatrix} z \\ x \end{smallmatrix} \right )} \hat \mydil_{h_{a(t,y)}},
\]
that is,
\begin{equation}\label{eq:wav:2}
	\left [ \hat \pi \bigl ( g ( t, x, y, z ) \bigr )f \right]( r , \myxi ) = \delta(t)^{1/2} \, e^{pt/2} 
			e^{-2i\pi( rz + \myxi x )} f \left ( r e^{pt} , ( r \trp{y} + \myxi ) e^{Bt} \right )
\end{equation}
for $f \in L^2( \widehat{\R^{n+1}})$, $r \in \R$, $\myxi  \in \rnhat$ and 
$\delta(t) = \det\left ( {e^{Bt}} \right )=e^{\textit{tr}(Bt)}$.
Clearly, $\widehat{ \R^{n+1}}$ decomposes measurably into the two $H_{p,B}$-invariant open half spaces
\[	
	\mathcal{O}_+ = \{ (r,\myxi ) : r>0 \} \qquad \text{and} \qquad \mathcal{O}_- = \{ (r,\myxi ) : r<0 \} .
\]
It thus can be seen from (\ref{eq:wav:2}) that  $L^2(\mathcal{O}_{+})$ and $L^2(\mathcal{O}_{-})$ are both 
$\hat \pi$-invariant subspaces of $L^2( \widehat{\R^{n+1}})$ and
consequently, the wavelet representation $\hat \pi$ splits into the direct sum $\hat \pi = \hat \pi_+ \oplus \hat \pi_{-}$
of the two subrepresentations $\hat \pi_{\pm}$
obtained by restricting $\hat \pi$ to these two invariant subspaces.

Similarly, by (\ref{eq3301}),  the metaplectic representation of the group $G_{p,B} \cong  M \rtimes E_{p,B}$ 
is given by 
\[		\mu\left (\,  g(t, x ,y,z)\,  \right ) = U_{m(z, x )} \mydil_{a(t,y)} .	\]
Since for each  vector $q=\left  ( \begin{smallmatrix} u \\ v \end{smallmatrix} \right ) \in \R^{n+1}$, $u \in \R$, $v \in \R^n$ we have
\begin{equation}\label{eq:83}	
		\trp{q} m(z,x) q = (u, \trp{v})  m(z,  x )
				 \left ( \begin{smallmatrix} u \\ v \end{smallmatrix} \right ) 	= -(u^2 z+  2u  \trp{v} x ) ,	
\end{equation}
it follows that
\begin{equation}\label{group:meta1}
\begin{split}
		\mu\bigl  (\,  g(t, x ,y,z)\,  \bigr ) f  \left ( \begin{matrix} u \\ v \end{matrix} \right )
			= \delta(t)^{1/2}  e^{pt(1-n)/4} e^{- i \pi (u^2 z+ 2 u  \trp{v} x )} 
					f\begin{pmatrix} e^{pt/2}u \\ e^{-pt/2} \trp{\left [e^{Bt} \right] } ( \frac{u}{2} y +  v ) \end{pmatrix}
\end{split}
\end{equation}
 for $f \in L^2(\R^{n+1})$,  with  $\delta(t) = \det{\left (e^{Bt}\right )}$.
Clearly,  $\R^{n+1}$ splits measurably into two $E_{p,B}$-invariant open half spaces
\[	
		\mathcal{U}_+  = \{ \left ( \begin{smallmatrix} u \\ v \end{smallmatrix} \right ) \ : \ u > 0 \} \quad   \text{and} \quad
		\mathcal{U}_{-} = \{ \left ( \begin{smallmatrix} u \\ v \end{smallmatrix} \right ) \ : \ u< 0 \}.	
\]
It can be seen from (\ref{group:meta1}) that $L^2(\mathcal{U}_{+})$ and $L^2(\mathcal{U}_{-})$ are both $\mu$-invariant subspaces of
 $L^2( \R^{n+1})$. Hence,   $\mu$ splits into the direct sum $\mu = \mu_+ \oplus \mu_{-}$
of the two subrepresentations $\mu_{\pm}$ obtained by restricting $\mu$ to each of the two invariant subspaces $L^2(\mathcal{U}_{\pm})$.
		
We next obtain a connection  between these representations, by employing the the techniques developed in 
\cite{Cordero,DMari,Namngam2}.

\begin{prop}
The subrepresentations $\mu_+$ and $\mu_-$ are both equivalent to $\hat \pi_+$.
\end{prop}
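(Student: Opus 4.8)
The plan is to realize each equivalence by an explicit unitary intertwining operator obtained from a quadratic change of variables, which is the mechanism that turns the chirp of the metaplectic representation into the linear phase of the wavelet representation. Comparing the phases in (\ref{group:meta1}) and (\ref{eq:wav:2}), the requirement that $e^{-i\pi u^2 z}$ match $e^{-2i\pi rz}$ and that $e^{-2i\pi u\trp{v}x}$ match $e^{-2i\pi\myxi x}$ forces the substitution
\[
r = \tfrac{1}{2}u^2, \qquad \myxi = u\,\trp{v}.
\]
Since $r = \tfrac12 u^2 > 0$ regardless of the sign of $u$, both halfspaces $\mathcal{U}_+$ and $\mathcal{U}_-$ are carried onto $\mathcal{O}_+$; for $\mu_+$ I would take the branch $u = \sqrt{2r}$ and for $\mu_-$ the branch $u = -\sqrt{2r}$, with $v = \trp{\myxi}/u$ in either case.

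First I would define, for $f \in L^2(\mathcal{U}_\pm)$,
\[
(\mathcal{T}_\pm f)(r,\myxi) = |u|^{-(n+1)/2}\, f(u, v).
\]
The Jacobian of $(u,v)\mapsto(r,\myxi)=(\tfrac12 u^2,\,u\trp{v})$ equals $|u|^{n+1}$, so this weight is precisely what renders $\mathcal{T}_\pm\colon L^2(\mathcal{U}_\pm)\to L^2(\mathcal{O}_+)$ a surjective isometry, hence unitary.

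Next I would verify the intertwining relation $\mathcal{T}_\pm\,\mu_\pm(g) = \hat\pi_+(g)\,\mathcal{T}_\pm$ by direct substitution. Under $\mu_\pm(g)$ the argument $(u,v)$ is sent to $(u',v')$ with $u'=e^{pt/2}u$ and $v'=e^{-pt/2}\trp{[e^{Bt}]}(\tfrac{u}{2}y+v)$; a one-line computation of $u'\trp{(v')}$ shows that, under the change of variables, this is exactly the pair $r'=re^{pt}$, $\myxi'=(r\trp{y}+\myxi)e^{Bt}$ appearing in the argument of $\hat\pi_+(g)$. The chirp collapses to the wavelet phase because $u^2z=2rz$ and $2u\trp{v}x=2\myxi x$ by construction. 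It then remains only to match the scalar prefactors: the common factor $\delta(t)^{1/2}$ cancels, while the discrepancy between the metaplectic factor $e^{pt(1-n)/4}$ and the wavelet factor $e^{pt/2}$ is absorbed by the Jacobian relation $|u'|^{-(n+1)/2}=e^{-pt(n+1)/4}|u|^{-(n+1)/2}$, which yields exactly $|u|^{-(n+1)/2}e^{pt(1-n)/4}=e^{pt/2}|u'|^{-(n+1)/2}$.

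The step I expect to require the most care is the transpose bookkeeping between the row vector $\myxi$ and the column vector $v$ when verifying $\myxi'=u'\trp{(v')}$, rather than any deep difficulty. The passage from $\mu_+$ to $\mu_-$ is essentially free: $u$ enters the phase only through $u^2$ and the normalization only through $|u|$, so switching to the branch $u=-\sqrt{2r}$ leaves every identity above untouched. Since $\mathcal{T}_+$ and $\mathcal{T}_-$ are unitary intertwiners from $\mu_+$, respectively $\mu_-$, onto the \emph{same} subrepresentation $\hat\pi_+$, we conclude that $\mu_+\cong\hat\pi_+\cong\mu_-$.
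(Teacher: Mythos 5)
Your proposal is correct and follows essentially the same route as the paper: the paper defines $\Psi\left(\begin{smallmatrix} u \\ v\end{smallmatrix}\right)=\bigl(\tfrac12 u^2, u\trp{v}\bigr)$ with Jacobian $u^{n+1}$ and builds unitaries $Q_\pm$ satisfying $\mu_\pm = Q_\pm\,\hat\pi_+\,Q_\pm^{-1}$, and your $\mathcal{T}_\pm$ are exactly the inverses $Q_\pm^{-1}$ of those operators, with the same branch choices $u=\pm\sqrt{2r}$ and the same prefactor bookkeeping. The verification by direct substitution, including the cancellation $e^{pt/2}|u'|^{-(n+1)/2}=e^{pt(1-n)/4}|u|^{-(n+1)/2}$, matches the paper's computation line for line.
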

\begin{proof}
Observe that for each $q  \in \R^{n+1}$, the map   
\[		 \begin{pmatrix} z \\ x \end{pmatrix} 	  \mapsto q^T m(z,x)  q 
\] 
defines a linear functional on $\R^{n+1}$. Hence there exists a unique $\Psi(q) \in \widehat{\R^{n+1}}$ so that
\[				
		q^T m(z,x)  q  = - 2 \Psi(q) \begin{pmatrix} z \\ x \end{pmatrix} 	\]
for all $z \in \R$, $x \in \R^n$. In fact, equation (\ref{eq:83}) shows that
\[	
			\Psi(q) = \Psi \!  \begin{pmatrix} u \\ v \end{pmatrix} 
					=  \Bigl ( \textstyle \frac{1}{2} u^2, u \trp{v} \Bigr ).	
\]
We observe that $\Psi$ is smooth with Jacobian determinant
\[			
		 J_{\Psi}\! \begin{pmatrix} u \\ v \end{pmatrix}    = u^{n+1} 
\]
which does not vanish on the open half planes $\mathcal{U}_+$ and 
$\mathcal{U}_{-}$. In fact, the restrictions of $\Psi$
to these sets constitute diffeomorphisms
\[		
	\Psi_+ : \mathcal{U}_+ \to \mathcal{O}_+  \qquad \text{and} \qquad \Psi_{-} : \mathcal{U}_- \to \mathcal{O}_+ ,
\]
respectively. Furthermore, for $(r, \xi) \in \mathcal{O}_+ \subset \widehat{\R^{n+1}}$ with $r \in \R$, $\xi \in \rnhat$ we have
\[			
		 \Psi_{\pm}^{-1} (r, \xi) = 
		 	 \begin{pmatrix} \pm \sqrt{2r} \\ \pm \frac{1}{\sqrt{2r}}  \, \trp{\xi} \end{pmatrix}  
		 	 \qquad \text{and} \qquad
		 	 J_{\Psi_{\pm}^{-1}}(r,\xi) =\pm  \left (  2r \right )^{-(n+1)/2} .
\]
It follows that the operators
\[
	Q_+: L^2(\mathcal{O}_+) \to L^2(\mathcal{U}_+) \qquad \text{and} \qquad Q_-: L^2(\mathcal{O}_+) \to L^2(\mathcal{U}_-) 
\]
defined by
\[
	\qquad\qquad 	(Q_{\pm}f)(q) = \left | J_{\Psi}(q) \right |^{1/2} f \left ( \Psi(q) \right ) \qquad\qquad
				( f \in  L^2(\mathcal{O}_+),  q \in  \mathcal{U}_\pm )
\]	
constitute Hilbert space isomorphism, whose inverses are given by
\[
	\qquad\qquad 
		(Q_{\pm}^{-1}f)(\eta) = \left | J_{\Psi_{\pm}^{-1}}(\eta) \right | ^{1/2} 
				f \left ( \Psi_{\pm}^{-1}(\eta) \right ) \qquad\qquad
					( f \in  L^2(\mathcal{U}_\pm),  \eta \in  \mathcal{O}_+ ).
\]	
We complete the proof by showing that
\[			\mu_{\pm} = Q_{\pm} \hat \pi_+ Q_{\pm}^{-1}.	\]
In fact, for all $f \in L^2(\mathcal{U}_{\pm})$ and $q= \left ( \begin{smallmatrix} u \\ v \end{smallmatrix} \right ) \in \R^{n+1}$
we have
\begin{align*}
	\Bigl [  Q_{\pm}  &\hat \pi_+(t,x,y,z)  Q_{\pm}^{-1}	f \Bigr ] (q)
		= \left | J_{\Psi} \! \left ( \begin{smallmatrix} u \\ v \end{smallmatrix} \right )  \right |^{1/2}
		 \left [  \hat \pi_+(t,x,y,z)  Q_{\pm}^{-1}	f \right ]  
						\left (  \Psi \left ( \begin{smallmatrix} u \\ v \end{smallmatrix} \right )  \right )  \\
		&= |u|^{(n+1)/2} \delta(t)^{1/2} e^{pt/2} e^{-2i \pi \left ((u^2/2) z + u \trp{v}x \right ) } 
				\left [  Q_{\pm}^{-1}	f  \right ] \! 
					\left ( \textstyle \frac{u^2}{2} e^{pt}, \bigl ( \frac{u^2}{2}\trp{y} + u\trp{v} \bigr ) e^{Bt} \right ) \\
		&= \delta(t)^{1/2} |u|^{(n+1)/2} e^{pt/2} e^{-i \pi \left (u^2 z + 2 u\trp{v}x \right ) } 
				\left | \pm u^2 e^{pt} \right |^{-(n+1)/4} f \! 
						\left ( \begin{matrix} \pm \sqrt{u^2 e^{pt}} \\
										\pm \frac{ 1}{\sqrt{ u^2 e^{pt}}	}
										 \trp{[ e^{Bt}]} \left (\frac{u^2}{2}y+ uv \right )
								\end{matrix} \right ) \\
		&= \delta(t)^{1/2}   e^{pt(1-n)/4} e^{-i \pi \left (u^2 z + 2 u\trp{v}x \right ) } 
						 f \! 
						\left ( \begin{matrix} e^{pt/2} u \\
										 e^{-pt/2}	 \trp{[ e^{Bt}]} \left (\frac{u}{2}y+v \right )
								\end{matrix} \right ) 
\end{align*}
which is precisely (\ref{group:meta1}).
\end{proof}

It now follows immediately that the metaplectic representation $\mu$ of $G_{p,B}$  
 is equivalent to the sum of two copies of $\hat \pi_+$,
\[		\mu = \mu_+ \oplus \mu_{-} \simeq \hat \pi_+ \oplus \hat \pi_+.	\]


\bigskip

\noindent{\bf Acknowledgements} A.S. is grateful to the Development and Promotion of Science and Technology Talents project (DPST) for their constantly support.



\end{document}